\newtheorem{thm}{Theorem}[section]
 \newtheorem{theorem}[thm]{Theorem}
 \newtheorem{lemma}[thm]{Lemma}
 \newtheorem{proposition}[thm]{Proposition}
 \newtheorem{definition}[thm]{Definition}
\newcommand{\beq}{\begin{equation}}
\newcommand{\eeq}{\end{equation}}
\newcommand{\beqa}{\begin{eqnarray}}
\newcommand{\eeqa}{\end{eqnarray}}
\newcommand{\beqas}{\begin{eqnarray*}}
\newcommand{\eeqas}{\end{eqnarray*}}
\newcommand{\bi}{\begin{itemize}}
\newcommand{\ei}{\end{itemize}}
\def\iff{\Leftrightarrow}
\def\RR{\mathds{R}}
\def\ones{\mathds{1}}
\def\pCRM{\text{P-CRM}}
\def\CRM{\text{CRM}}
\title{Parallelizing the Circumcentered-Reflection Method}
\date{}
\begin{document}

\maketitle

\begin{center}
\begin{tabular}{ccc}

\begin{tabular}{c}
Pablo Barros\\
School of Applied Mathematics, FGV\\
Praia de Botafogo, Rio de Janeiro, Brazil\\
{\tt pabloacbarros@gmail.com}
\end{tabular}
&
&
\begin{tabular}{c}
Roger Behling\\
Department of Mathematics, UFSC\\
Blumenau, SC, Brazil\\
{\tt rogerbehling@gmail.com}\\
\end{tabular}\\
&&\\
\begin{tabular}{c}
Vincent Guigues\\
School of Applied Mathematics, FGV\\
Praia de Botafogo, Rio de Janeiro, Brazil\\
{\tt vincent.guigues@fgv.br}
\end{tabular}
&
&
\begin{tabular}{c}
Luiz-Rafael Santos\\
Department of Mathematics, UFSC\\
Blumenau, SC, Brazil\\
{\tt l.r.santos@ufsc.br}
\end{tabular}
\end{tabular}
\end{center}

\begin{abstract}
This paper introduces the Parallelized Circumcentered Reflection Method (\pCRM), a circumcentric approach that parallelizes the Circumcentered Reflection Method (CRM) for solving Convex Feasibility Problems in affine settings. Beyond feasibility, \pCRM{} solves the best approximation problem for any finite collection of affine subspaces; that is, it not only finds a feasible point but directly computes the projection of an initial point onto the intersection. Within a fully self-contained scheme, we also introduce the Framework for the Simultaneous Projection Method (F-SPM) which includes Cimmino’s method as a special case.
 Theoretical results show that both \pCRM{} and F-SPM achieve linear convergence. Moreover, \pCRM{} converges at a rate that is at least as fast as, and potentially superior to, the best convergence rate of F-SPM. As a byproduct, this also yields a new and simplified convergence proof for Cimmino’s method. Numerical experiments show that \pCRM{} is competitive compared to CRM and indicate that it offers a scalable and flexible alternative, particularly suited for large-scale problems and modern computing environments.\\  
\end{abstract}

{\bf Key words.} Optimization algorithms, Best approximation problem, Circumcentered Reflection method, Projection and reflection operators.\\
		
		{\bf AMS subject classifications.} 90C25, 90C30, 90C60.

\section{Introduction}

 	Let $\RR^n$ denote the standard $n$-dimensional Euclidean space equipped with  inner product and norm denoted by $\left\langle \cdot,\cdot\right\rangle $
	and $\|\cdot\|$, respectively.

  The Convex Feasibility Problem in \( \RR^n \) consists of finding a point in the intersection of a finite collection of closed convex sets \( \{ U_i \}_{i=1}^m \), where each \(U_i \subset \RR^n\). Denoting this intersection by \( S \), and assuming \( S \neq \emptyset \), the problem is formally stated as:
        \begin{align}
        \text{find } s^{\star} \in S := \bigcap_{i=1}^m U_i. \label{CFP} \tag{CFP}        
        \end{align}
        The goal is to identify a point \( s^{\star} \) that simultaneously satisfies all constraints imposed by the sets \( U_i \), each of which contributes to defining the feasible region \( S \).

Projection methods are a class of iterative algorithms designed to solve this problem by leveraging the concept of projecting onto individual sets. These methods aim to achieve convergence to a solution through repeated projections, with the solution emerging as the limit of this iterative process.

The Circumcentered Reflection method (CRM) utilizes the circumcenter
$\text{circ}(x_0, x_1, x_2, \ldots, x_m)$
of a set of points $x_0,x_1,\ldots,x_m$, to accelerate classical projection methods. Introduced in \cite{crm1}, CRM has since inspired a substantial body of research such as \cite{crm_inspired1, crm_inspired2, crm_inspired3, crm_inspired4, crm_inspired5,Araujo:2022, Arefidamghani:2021, Behling:2021a, Behling:2024, Behling:2024b,Behling:2024c,Behling:2023} and the present one.

Indeed, CRM addresses a stronger version of \ref{CFP} for affine sets: specifically, the problem of projecting onto the intersection $S$ given that $U_1, \ldots, U_m$ are affine subspaces, also known as the \textit{Linear Best Approximation Problem}, stated as
\begin{align}
\text{given }x \in \RR^n, \text{ find } \bar{x} \in S \text{ such that } \|\bar{x} - x\| = \min_{s \in S} \|s - x\|. \label{LBAP} \tag{LBAP}
\end{align}

We need the two following definitions.
\begin{definition} Let $C$ be a closed, convex, and nonempty set of $\RR^n$
and $x \in \RR^n$.
The projection $P_C(x)$ of $x$ 
onto $C$ is the unique point $z \in C$
such that 
$$\|z-x\| = \emph{dist}(x,C) := \underset{w \: \in \: C}{\inf} \: \|w-x\|.$$
\end{definition}

\begin{definition} Let $C$ be a closed, convex, and nonempty set of $\RR^n$
and $x \in \RR^n$.
The reflection of $x$ with respect to $C$ is $R_C(x) := 2P_C(x) - x.$
\end{definition}

As presented in \cite{crm2}, CRM for solving \ref{LBAP} works as follows: given the current point \( z_k \in \RR^n \), the next iterate of CRM is defined by
\[
z_{k+1}^{\text{CRM}} = \text{T}_{\text{CRM}}(z_k) := \text{circ}(z_k, R_{U_1}(z_k), R_{U_2} R_{U_1}(z_k), \ldots, R_{U_m} \cdots R_{U_2} R_{U_1}(z_k)),
\]
where \( R_{U_1}, R_{U_2}, \ldots, R_{U_m} : \RR^n \to \RR^n \) are reflections through sets \( U_1, U_2, \ldots, U_m\).\\

In this context, the contributions of this work are as follows.\\

\par {\textbf{Framework for SPM.}} We introduce F-SPM (Framework for the Simultaneous Projection Method), a general scheme for solving \ref{LBAP} that includes the classical Simultaneous Projection Method, also known as Cimmino's method \cite{Cimmino1938}, as a special case. We establish the convergence of F-SPM to a solution of \ref{LBAP}, thereby providing a new and simplified convergence proof for Cimmino's method.\\

\par {\textbf{Parallelized CRM.}} We propose a parallelized variant of CRM, termed \pCRM{} (Parallelized Circumcentered Reflection Method), which replaces sequential reflections with simultaneous ones. The next iterate is defined as
\[
z_{k+1}^{\text{\pCRM}} = \text{C}_{\text{P}}(z_k) := \text{circ}(z_k, R_{U_1}(z_k), R_{U_2}(z_k), \ldots, R_{U_m}(z_k)),
\]
allowing all reflections to be computed independently. This parallelized structure offers computational advantages, particularly in parallel processing environments. We show that \pCRM{} achieves a convergence rate at least as fast as the best rate attainable by F-SPM.\\

\par {\textbf{Numerical experiments.}} We present numerical experiments comparing the performance of CRM, \pCRM, and F-SPM on various convex feasibility problems with affine constraint sets. While \CRM{} remains competitive for several problem instances, \pCRM{} performs comparably or better in others, even on a moderately threaded machine.\\

The outline of the paper is as follows. 
In Section \ref{sec:spm}, we present a framework for the 
Simultaneous Projection Method  and show that it converges linearly to a point 
$s^{\star}$ in
$S$ which is the projection of the initial point $x$ onto $S$. In Section \ref{sec:headings}, we introduce a parallelized variant \pCRM{} of CRM which can be seen as a method parallelizing the CRM method. We show that 
\pCRM{} also converges
linearly to a point 
$s^{\star}$ in
$S$ which is the projection of the initial point $x$ onto $S$, at a rate at least as good as the best convergence rate of F-SPM. Numerical experiments are presented in Section \ref{sec:numsim}.

\section{F-SPM: a framework for SPM}\label{sec:spm}

Let us consider
$m$ affine subspaces
$U_1, U_2, \ldots, U_m$ of $\RR^n$ with nonempty intersection $S$.
The F-SPM method to find a point in $S$
is given below.

	\noindent\rule[0.5ex]{1\columnwidth}{1pt}
	
	F-SPM
	
	\noindent\rule[0.5ex]{1\columnwidth}{1pt}
	\begin{itemize}
		\item [0.] Let $x \in \RR^n$, $U_1, U_2, \ldots, U_m$ affine subspaces, and  $p_0 \geq 0$, $p_1,\ldots, p_m > 0$ with $\displaystyle \sum_{i=0}^m p_i = 1$ be given.
	    \item[1.] Compute $P_{U_1}(x)$, $P_{U_2}(x)$, $\ldots$, $P_{U_m}(x)$.
		\item[2.] Compute 
  $$\text{T}_{\text{F-SPM}}(x) = p_0x + p_1P_{U_1}(x) + p_2P_{U_2}(x) + \ldots + p_mP_{U_m}(x).$$
		\item[3.] Do $x \leftarrow \text{T}_{\text{F-SPM}}(x)$ and
go to Step 1.
	\end{itemize}
	\rule[0.5ex]{1\columnwidth}{1pt}

F-SPM iterations compute the sequence
$
(\text{T}_{\text{F-SPM}}^k(x))_k
$ given by
$$
\text{T}_{\text{F-SPM}}^0(x) = x,\quad
\text{T}_{\text{F-SPM}}^k(x) = \text{T}_{\text{F-SPM}}\left(\text{T}_{\text{F-SPM}}^{k-1}(x) \right),\;k \geq 1.
$$
As an example, we can consider
$p_i=\frac{1}{m+1}$ for all $i$ yielding
$$
\text{T}_{\text{F-SPM}}(x) =
\frac{1}{m+1}\left(x+\sum_{i=1}^m P_{U_i}(x)
\right).
$$
Another special case is Cimmino's method, obtained taking
$p_0=0$ and $p_i=\frac{1}{m}$ for $i=1,\ldots,m,$ yielding
$$
\text{T}_{\text{F-SPM}}(x)=\frac{1}{m}\sum_{i=1}^m P_{U_i}(x).
$$
In Theorem \ref{convspm}, we prove the convergence
of the sequence $(\text{T}_{\text{F-SPM}}^k(x))_k$
generated by F-SPM to a solution
of \ref{CFP} which is the projection
\begin{equation}\label{defstar}
s^\star=P_S(x)
\end{equation}
of  the initial point $x$ to $S$. To prove this theorem, we start with some preliminary results in the next subsection.

\subsection{Preliminary results}

We begin by recalling essential results from convex analysis regarding projections onto closed convex sets. The proposition below guarantees the existence and uniqueness of these projections and characterizes them via a fundamental variational inequality. This condition not only underpins the geometric structure of our algorithms but also serves as a key tool in the convergence analysis of both F-SPM and \pCRM.

\begin{proposition}\label{propproj} For a closed, convex set $C \neq \emptyset$ and any $x \in \RR^n,$ there exists a unique $z \in C$ such that
$$\|z-x\| = \emph{dist}(x,C) := \underset{w \: \in \: C}{\inf} \: \|w-x\|.$$
Such $z$ is called the \textit{projection of $x$ onto $C$} and denoted $P_C(x).$ Moreover, for $z \in C,$
$$z=P_C(x) \text{ if and only if } (x-z)^T(w-z) \le 0 \text{ for all } w \in C.$$
\end{proposition}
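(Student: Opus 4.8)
The plan is to establish the three assertions in turn: existence of a minimizer, its uniqueness, and the variational characterization. All three rest on the parallelogram identity $\|a+b\|^2+\|a-b\|^2 = 2\|a\|^2+2\|b\|^2$ combined with the convexity of $C$, so I would state that identity at the outset and then use it three times.

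For existence, I would set $d := \text{dist}(x,C)$ and choose a minimizing sequence $(w_k)\subset C$ with $\|w_k-x\|\to d$. Applying the parallelogram identity to $a=w_k-x$ and $b=w_l-x$ gives
$$\|w_k-w_l\|^2 = 2\|w_k-x\|^2 + 2\|w_l-x\|^2 - 4\left\|\tfrac{w_k+w_l}{2}-x\right\|^2 .$$
Since $C$ is convex, $\tfrac{w_k+w_l}{2}\in C$, so the last term is at least $4d^2$; letting $k,l\to\infty$, the right-hand side tends to $0$, hence $(w_k)$ is Cauchy. By completeness of $\RR^n$ it converges to some $z$, and closedness of $C$ gives $z\in C$; continuity of the norm then yields $\|z-x\|=d$. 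Uniqueness follows from the same identity: if $z_1,z_2\in C$ both attain $d$, then $\tfrac{z_1+z_2}{2}\in C$ and $\|z_1-z_2\|^2 = 4d^2 - 4\left\|\tfrac{z_1+z_2}{2}-x\right\|^2 \le 0$, so $z_1=z_2$.

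For the variational inequality I would argue both implications directly. For the ``if'' direction, assuming $(x-z)^T(w-z)\le 0$ for all $w\in C$, I expand, for an arbitrary $w\in C$,
$$\|w-x\|^2 = \|w-z\|^2 - 2(x-z)^T(w-z) + \|z-x\|^2 \ge \|z-x\|^2 ,$$
which shows $z$ is the minimizer and hence $z=P_C(x)$. For the ``only if'' direction, given $z=P_C(x)$, any $w\in C$ and $t\in(0,1]$, convexity gives $z+t(w-z)=(1-t)z+tw\in C$, so
$$\|z-x\|^2 \le \|z-x+t(w-z)\|^2 = \|z-x\|^2 + 2t(z-x)^T(w-z) + t^2\|w-z\|^2 ;$$
dividing by $t>0$ and letting $t\downarrow 0$ yields $(x-z)^T(w-z)\le 0$.

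I do not anticipate a genuine obstacle, since the statement is classical; the only step needing a little care is the Cauchy argument in the existence proof, where convexity is precisely what lets one bound $\big\|\tfrac{w_k+w_l}{2}-x\big\|$ below by $d$ (without it the infimum need not be attained). The remaining steps are short, self-contained computations.
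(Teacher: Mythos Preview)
Your proof is correct; the approach differs from the paper's in two places. For existence the paper simply notes that a minimizing sequence is bounded in $\RR^n$, extracts a convergent subsequence, and invokes closedness of $C$; you instead use the parallelogram identity together with convexity to show the minimizing sequence is Cauchy. Your argument is the standard Hilbert-space proof and works verbatim in infinite dimensions, whereas the paper's compactness shortcut is specific to $\RR^n$ but slightly shorter in this setting. For uniqueness the paper first establishes the variational inequality and then deduces $z_1=z_2$ by adding the two inequalities $(z_1-x)^T(z_1-z_2)\le 0$ and $(x-z_2)^T(z_1-z_2)\le 0$; you obtain uniqueness directly from the parallelogram identity, independently of the characterization. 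The variational-inequality step itself is essentially identical in both proofs: parametrize $z+t(w-z)\in C$, expand the square, and let $t\downarrow 0$.
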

\begin{proof} Consider a sequence \((w_n)\) of points in $C$ such that $\|x - w_n\| \to \text{dist}(x, C).$
This implies that \( (w_n) \) is bounded. Therefore, this sequence has a convergent subsequence \(w_{n_k} \to z\). The closure of $C$ gives us $z \in C,$ hence
\[
\text{dist}(x, C) \leq \|x - z\| \le \|x-w_{n_k}\| + \|w_{n_k}-z\| \to \text{dist}(x, C),
\]
and we obtain \( \text{dist}(x, C) = \|x - z\| \).

Because of the convexity of $C$, we have 
$$u \in C \iff u=z+t(w-z), \text{ for some } w \in C \text{ and } t \in [0,1].$$
Hence,
\begin{align*}
    \|z-x\| = \text{dist}(x,C) &\iff \|z-x\| \le \|z-x+t(w-z)\|
    \\ &\iff 0 \le -2t(x-z)^T(w-z)+t^2\|(w-z)\|^2
    \\ &\iff (x-z)^T(w-z) \le t\|(w-z)\|^2 \quad \forall \: t \in (0,1]
    \\ &\iff (x-z)^T(w-z) \le 0 \quad \forall \: w \in C.
\end{align*}
To establish uniqueness, we note that if $\|z_1-x\| = \|z_2-x\| = \text{dist}(x,C)$, then, as showed above, 
\begin{align*}
    (z_1-x)^T(z_1-z_2) &\le 0,
    \\ (x-z_2)^T(z_1-z_2) &\le 0.
\end{align*} 
Summing up, we obtain $(z_1-z_2)^T(z_1-z_2) =  \|z_1-z_2\|^2 \le 0$, then $z_1=z_2.$
\end{proof}
\vspace*{0.5cm}
\par The following proposition specializes the previous projection results to the case of affine subspaces. Unlike general convex sets, affine subspaces exhibit additional symmetries that yield sharper identities and structural properties. Part (i) provides an alternative characterization of the projection via orthogonality conditions, which will be instrumental when analyzing algorithmic steps that involve intersections of affine spaces. Parts (ii) and (iii) highlight geometric invariances -- respectively, a Pythagorean identity and a reflection symmetry -- that simplify several norm estimates later in the analysis. Lastly, part (iv) emphasizes the linear-algebraic nature of both projection and reflection onto affine subspaces, which underpins their compatibility with parallel and compositional strategies in our method.

\begin{proposition}\label{propprojaff} Let $S$ be an affine subspace of $\RR^n$ and $x \in \RR^n.$ Then
\begin{itemize}
    \item[(i)] if $\bar{x} \in S,$ we have $\bar{x} = P_S(x)$  if, and only if, $\langle x-\bar{x}, s - \bar{x} \rangle = 0$ for all $s \in S$;
    \item[(ii)] $\|x - P_S(x)\|^2 = \|x - s\|^2 - \|s - P_S(x)\|^2,$ for all $s \in S$;
    \item[(iii)] $\|x - s\| = \|R_S(x) - s\|,$ for all $s \in S$;
    \item[(iv)] $P_S$ and $R_S$ are affine operators.
\end{itemize}
\end{proposition}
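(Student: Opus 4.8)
The plan is to reduce all four parts to Proposition~\ref{propproj} together with the single structural feature of an affine subspace that a general convex set lacks: if $\bar x, s \in S$ then the whole line $\{\bar x + t(s-\bar x) : t \in \RR\}$ lies in $S$, not merely the segment $t \in [0,1]$. For part (i), since $\bar x \in S$, Proposition~\ref{propproj} says $\bar x = P_S(x)$ if and only if $(x-\bar x)^T(w-\bar x) \le 0$ for every $w \in S$. I would apply this both to $w = s$ and to $w = 2\bar x - s$ (the latter lies in $S$, being an affine combination of $\bar x$ and $s$), obtaining $\langle x-\bar x, s-\bar x\rangle \le 0$ and $\langle x-\bar x, s-\bar x\rangle \ge 0$, hence the claimed equality; conversely an equality trivially implies the inequality of Proposition~\ref{propproj}. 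I would then record the reformulation: (i) says exactly that $x - P_S(x)$ is orthogonal to the direction subspace $V$ of $S$, i.e.\ orthogonal to $S - a$ for any (hence every) $a \in S$.

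For (ii) and (iii) I would set $P := P_S(x)$ and expand squared norms around $P$. Writing $x-s = (x-P) + (P-s)$ with $s \in S$, part (i) makes the cross term $\langle x-P, P-s\rangle$ vanish, so $\|x-s\|^2 = \|x-P\|^2 + \|P-s\|^2$, which is (ii) after rearranging. For (iii), writing $R_S(x) - s = 2P - x - s = (P-x) + (P-s)$ and expanding, the cross term again vanishes by (i), so $\|R_S(x)-s\|^2 = \|x-P\|^2 + \|P-s\|^2$, and this equals $\|x-s\|^2$ by (ii). For (iv) I would verify the affine-combination identity for $P_S$ directly: given $x, y \in \RR^n$ and $\lambda \in \RR$, put $p := \lambda P_S(x) + (1-\lambda)P_S(y)$, which lies in $S$ since the coefficients sum to $1$. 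By the reformulation of (i), both $x-P_S(x)$ and $y-P_S(y)$ are orthogonal to $V$; since $s - p \in V$ for every $s \in S$, we get $\langle (\lambda x + (1-\lambda)y) - p,\, s-p\rangle = \lambda\langle x-P_S(x), s-p\rangle + (1-\lambda)\langle y-P_S(y), s-p\rangle = 0$ for all $s \in S$, so (i) gives $P_S(\lambda x + (1-\lambda)y) = p$, i.e.\ $P_S$ is affine. That $R_S$ is affine then follows at once from $R_S(\lambda x + (1-\lambda)y) = 2P_S(\lambda x + (1-\lambda)y) - (\lambda x + (1-\lambda)y) = \lambda R_S(x) + (1-\lambda)R_S(y)$.

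The computations here are all one-liners, so there is no serious obstacle; the only point needing a little care is the bookkeeping in (iv), namely the observation that as $s$ ranges over $S$ the vectors $s - p$ are exactly the elements of the common direction subspace $V$ (because $p$ itself lies in $S$), which is what allows the orthogonality relations supplied by (i) at the base points $P_S(x)$ and $P_S(y)$ to be transported to the single point $p$. Everything else is just the passage from the variational \emph{inequality} of Proposition~\ref{propproj} to the variational \emph{equality} of part (i), which is the one place where the affine, rather than merely convex, structure of $S$ is used.
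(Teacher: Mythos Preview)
Your proposal is correct and follows essentially the same argument as the paper's proof: part (i) via Proposition~\ref{propproj} applied to $s$ and $2\bar x - s$, parts (ii) and (iii) via Pythagoras with the cross term killed by (i), and part (iv) by checking that the affine combination of projections satisfies the orthogonality characterization from (i). The only cosmetic difference is that the paper records the two-point identity $\langle x-P_S(x), s-s'\rangle = 0$ for all $s,s'\in S$ as an intermediate step, whereas you phrase the same fact as ``$x-P_S(x)$ is orthogonal to the direction subspace $V$''; these are equivalent and the proofs are otherwise identical.
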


\begin{proof}
\begin{enumerate}
    \item[(i)] Since $S$ is affine and $\bar{x} \in S,$
    $$s \in S \iff 2\bar{x}-s \in S.$$
    From Proposition \ref{propproj}, 
    \begin{align}
        \bar{x} = P_S(x) &\iff (x-\bar{x})^T(s-\bar{x}) \le 0, (x-\bar{x})^T((2\bar{x}-s)-\bar{x}) \le 0,\;\forall s \in S, \nonumber
        \\ &\iff (x-\bar{x})^T(s-\bar{x}) = 0,\, \forall s \in S. \label{affproj}
    \end{align}
    \item[(ii)]  Writing \eqref{affproj} for $s, s' \in S$ and substracting the corresponding equalities, we obtain 
    \begin{equation}
    \langle x - P_S(x), s - s' \rangle = 0 \label{projaff2}
    \end{equation}
    for any $x$ and $s, s' \in S$. Relation \eqref{projaff2}
for $s'=P_S(x)$ gives 
\begin{equation}\label{ortxsp}
\langle x - P_S(x), s - P_S(x) \rangle = 0.
\end{equation}
By Pythagoras theorem we deduce
    $$\|P_S(x)-x\|^2 + \|s - P_S(x)\|^2 = \|P_S(x)-x+s-P_S(x)\|^2 = \|x-s\|^2$$
    for all $s \in S$.
    \item[(iii)] Again by Pythagoras theorem and relation
    \eqref{ortxsp}, we have
    $$\|x-s\|^2=\|P_S(x)-x\|^2+\|P_S(x)-s\|^2 = \|P_S(x)-x+P_S(x)-s\|^2 = \|R_S(x)-s\|^2.$$
    \item[(iv)] Given $\alpha \in \RR$, $x,y \in \RR^n$,    let $z = \alpha x + (1-\alpha) y$ and $\bar{z} = \alpha P_S(x) + (1- \alpha) P_S(y).$
    
    Then, for any $s, s' \in S$ we have from \eqref{projaff2}
    \begin{align*}
        0=\alpha \langle x - P_S(x), s - s' \rangle + (1-\alpha) \langle y - P_S(y), s - s' \rangle = \langle z - \bar{z}, s - s' \rangle.
    \end{align*}
    Taking $s' = \bar{z} \in S$ and using (i), we obtain $\bar{z} = P_S(z)$, which shows that $P_S$ is affine.
    
    Since $R_S=2P_C-\text{Id}$ is an affine combination of two affine operators, it is also affine.
\end{enumerate}
\end{proof}

The next lemma explores how projections and reflections interact with intersections of affine subspaces. In particular, it shows that projecting a point onto an affine subspace \( U \), or reflecting it across \( U \), does not alter its projection onto the intersection \( U \cap V \). This invariance plays a key role in justifying the use of intermediate projections or reflections within iterative methods like F-SPM and \pCRM. By preserving the projection onto the intersection, such operations allow for decomposition strategies that remain consistent with the underlying geometric goal: convergence to a point in the intersection.

\begin{lemma}\label{lemafflem1} Let $U$ and $V$ be affine subspaces of $\RR^n$ with $U \cap V \neq \emptyset.$ For any $x \in \RR^n$, it holds that 
$$P_{U \cap V}(P_U(x)) = P_{U \cap V}(R_U(x)) = P_{U \cap V}(x).$$
\end{lemma}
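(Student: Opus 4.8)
The plan is to reduce everything to the orthogonality characterization of projections onto affine subspaces in Proposition \ref{propprojaff}(i), together with relation \eqref{projaff2}. First I would observe that $W := U \cap V$ is itself a nonempty affine subspace, so $P_W$ is well defined, and I set $\bar{x} := P_W(x)$. Since $\bar{x} \in W \subseteq U$, the point $\bar{x}$ lies in $W$ and is therefore a legitimate candidate for both $P_W(P_U(x))$ and $P_W(R_U(x))$; by Proposition \ref{propprojaff}(i) it suffices to verify the orthogonality condition $\langle y - \bar{x},\, w - \bar{x}\rangle = 0$ for all $w \in W$, once with $y = P_U(x)$ and once with $y = R_U(x)$.

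Next I would record two orthogonality facts. Because $\bar{x} = P_W(x)$ and $w \in W$, Proposition \ref{propprojaff}(i) applied to the affine subspace $W$ gives $\langle x - \bar{x},\, w - \bar{x}\rangle = 0$. Because $w, \bar{x} \in W \subseteq U$, the vector $w - \bar{x}$ is a difference of two points of $U$, so relation \eqref{projaff2} applied to $U$ yields $\langle x - P_U(x),\, w - \bar{x}\rangle = 0$.

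The remaining computations are two-line inner-product manipulations. For $P_U(x)$: write $P_U(x) - \bar{x} = (P_U(x) - x) + (x - \bar{x})$ and pair with $w - \bar{x}$; both terms vanish by the two facts above, so $\langle P_U(x) - \bar{x},\, w - \bar{x}\rangle = 0$ for every $w \in W$, whence $P_{W}(P_U(x)) = \bar{x} = P_W(x)$. For $R_U(x) = 2P_U(x) - x$: write $R_U(x) - \bar{x} = 2\bigl(P_U(x) - \bar{x}\bigr) - (x - \bar{x})$ and pair with $w - \bar{x}$, obtaining $2\cdot 0 - 0 = 0$, so $P_W(R_U(x)) = \bar{x} = P_W(x)$ as well. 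Chaining these gives $P_{U\cap V}(P_U(x)) = P_{U\cap V}(R_U(x)) = P_{U\cap V}(x)$.

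I do not expect any genuine obstacle. The only point needing a moment's care is the passage from $W$ to $U$ in the second orthogonality fact: one must note that $w - \bar{x}$ belongs to the direction space of $U$ (being a difference of two points of $W \subseteq U$), which is precisely what licenses invoking \eqref{projaff2} for $U$ and not merely for $W$. Everything else is routine.
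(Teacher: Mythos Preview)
Your proof is correct and follows essentially the same route as the paper: both arguments hinge on the orthogonality characterization in Proposition~\ref{propprojaff}(i), using that $x - P_U(x)$ is orthogonal to any difference of points of $U$ (in particular to $w-\bar{x}$ for $w,\bar{x}\in W\subseteq U$) together with the defining orthogonality for $\bar{x}=P_W(x)$. The only cosmetic difference is in the reflection identity: the paper invokes the affineness of $P_{U\cap V}$ (Proposition~\ref{propprojaff}(iv)) to write $P_{U\cap V}(2P_U(x)-x)=2P_{U\cap V}(P_U(x))-P_{U\cap V}(x)$, whereas you treat $R_U(x)$ by the same direct inner-product decomposition you used for $P_U(x)$.
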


\begin{proof} Applying Proposition \ref{propprojaff} (i), we obtain that for any $s \in U \cap V,$
\begin{equation}\label{firstl}
    \langle P_{U \cap V}(P_U(x))-P_U(x), x - P_U(x) \rangle = 0,
\end{equation}
where we have used the fact that 
$P_{U \cap V}(P_U(x)) \in U$. For every
$s \in U \cap V,$ we have $s \in U$, which implies
using again Proposition \ref{propprojaff} (i) that
\begin{equation}\label{secondl}
\langle s - P_U(x), x - P_U(x) \rangle = 0.
\end{equation}
Combining \eqref{firstl} and \eqref{secondl}, we have for every $s \in U \cap V$ that
\begin{equation}\label{thirdl}
\langle x - P_U(x), s - P_{U \cap V}(P_U(x)) \rangle = 0.
\end{equation}
Applying once again Proposition \ref{propprojaff} (i), we also have
\begin{equation}\label{fourthl}
\langle P_U(x) - P_{U \cap V}(P_U(x)), s - P_{U \cap V}(P_U(x)) \rangle=0
\end{equation}
for every $s \in U \cap V$.
Finally, summing \eqref{thirdl} and \eqref{fourthl}
we get 
$$
\langle x - P_{U \cap V}(P_U(x)), s - P_{U \cap V}(P_U(x)) \rangle = 0,
$$
for every $s \in U\cap V$ which gives, using once again Proposition \ref{propprojaff}, $P_{U \cap V}(P_U(x)) = P_{U \cap V}(x).$

By Proposition \ref{propprojaff}-(iv), we next have 
$$P_{U \cap V}(R_U(x)) = P_{U \cap V}(2P_U(x)-x) = 2P_{U \cap V}(P_U(x))-P_{U \cap V}(x) = P_{U \cap V}(x)$$
and the result follows.
\end{proof}

To study the geometric relationship between affine subspaces, we begin by isolating their linear structure. This is done by associating to each affine subspace a direction subspace, defined via translation by a point in the intersection. These direction subspaces will serve as the foundational objects in defining the angle between subspaces.

\begin{definition}\label{diraff}
 Let $U$ and $V$ be affine subspaces of $\RR^n$ with $U \cap V \neq \emptyset.$
We define the directions 
 \(\hat{U}\) and \(\hat{V}\)
 of $U$ and $V$ by
 subspaces given by \(U - \hat{z}\) and \(V - \hat{z}\), respectively, for an arbitrary but fixed \(\hat{z} \in U \cap V\).
\end{definition}

With the direction subspaces in place, we now introduce the Friedrichs angle, a classical tool to quantify the relative orientation of two subspaces. Its cosine captures the degree of transversality between \(U\) and \(V\), and plays a crucial role in establishing linear convergence for projection-type methods.

\begin{definition}
 Let $U$ and $V$ be affine subspaces of $\RR^n$ with $U \cap V \neq \emptyset.$
The cosine of the Friedrichs angle $\theta_F \in (0, \frac{\pi}{2}]$ between $U$ and $V$ is defined as
    \[c_F(U, V) := \sup \{\langle u, v \rangle \mid u \in \hat{U} \cap (\hat{U} \cap \hat{V})^\perp, \quad v \in \hat{V} \cap (\hat{U} \cap \hat{V})^\perp, \quad \|u\| \leq 1, \quad \|v\| \leq 1\},\]
where \(\hat{U}\) and \(\hat{V}\) are directions of $U$ and
$V$ given in Definition \ref{diraff}.
\end{definition}

The following proposition states that the cosine of the Friedrichs angle between two affine subspaces with nontrivial intersection is always strictly less than one. This non-degeneracy condition ensures that the angle is well-defined and bounded away from zero, which is essential for guaranteeing geometric decay in the iterates.

\begin{proposition} Let $U$ and $V$ be affine subspaces of $\RR^n$ with $U \cap V \neq \emptyset.$ The cosine of the Friedrichs angle between $U$ and $V$ satisfies 
$$0 \leq c_F(U, V) < 1.$$
\end{proposition}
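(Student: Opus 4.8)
The plan is to treat the two bounds separately, the lower one being immediate. For $c_F(U,V)\ge 0$, I would note that $\hat U\cap(\hat U\cap\hat V)^\perp$ and $\hat V\cap(\hat U\cap\hat V)^\perp$ are linear subspaces and hence both contain the origin, so the pair $u=v=0$ is admissible in the defining supremum and produces the value $\langle 0,0\rangle=0$; thus the supremum is at least $0$. For $c_F(U,V)\le 1$, the Cauchy--Schwarz inequality gives $\langle u,v\rangle\le\|u\|\,\|v\|\le 1$ for every admissible pair, so the supremum is bounded above by $1$.

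The substantive part is the strict inequality $c_F(U,V)<1$. I would set $M:=\hat U\cap\hat V$, $L_U:=\hat U\cap M^\perp$ and $L_V:=\hat V\cap M^\perp$, which are linear subspaces of $\RR^n$, and first record the elementary but decisive identity $L_U\cap L_V=\hat U\cap\hat V\cap M^\perp=M\cap M^\perp=\{0\}$. Next I would observe that $c_F(U,V)$ is the supremum of the continuous function $(u,v)\mapsto\langle u,v\rangle$ over the set $\{u\in L_U:\|u\|\le 1\}\times\{v\in L_V:\|v\|\le 1\}$, which is closed and bounded, hence compact, in the finite-dimensional space $\RR^n\times\RR^n$; therefore the supremum is attained at some admissible pair $(u^\star,v^\star)$.

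Then I would argue by contradiction: if $c_F(U,V)=1$, then $\langle u^\star,v^\star\rangle=1$, and combining this with $\|u^\star\|\le 1$, $\|v^\star\|\le 1$ and Cauchy--Schwarz forces $\|u^\star\|=\|v^\star\|=1$ together with equality in Cauchy--Schwarz, which yields $u^\star=v^\star$. But then $u^\star\in L_U\cap L_V=\{0\}$, contradicting $\|u^\star\|=1$. Hence $c_F(U,V)<1$, and combined with the first paragraph this gives $0\le c_F(U,V)<1$. I do not anticipate any serious obstacle; the only step deserving a moment's care is ensuring the supremum is attained (via compactness of the constraint set), since the argument relies on the rigidity case of Cauchy--Schwarz rather than just the inequality itself.
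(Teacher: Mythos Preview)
Your proposal is correct and follows essentially the same approach as the paper: both arguments use compactness of the constraint set to attain the supremum, the pair $(0,0)$ for the lower bound, Cauchy--Schwarz for the upper bound, and the equality case of Cauchy--Schwarz together with $L_U\cap L_V=\{0\}$ to derive the contradiction for strictness. The only cosmetic difference is that the paper writes the equality case as $u=\alpha v$ for some $\alpha\ge 0$ before concluding $u\in L_U\cap L_V$, whereas you go directly to $u^\star=v^\star$ via the unit-norm constraint.
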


\begin{proof} Notice that 
$$W :=\{ u \in \hat{U} \cap (\hat{U} \cap \hat{V})^\perp; \: \|u\| \le 1\} \times \{v \in \hat{V} \cap (\hat{U} \cap \hat{V})^\perp; \: \|v\| \le 1\}$$
is compact in $\RR^n \times \RR^n.$ As a result, $\sup \{\langle u, v \rangle \mid (u,v) \in W\}$ is attained which implies $c_F(U, V) = \langle u, v \rangle$ for some $(u,v) \in W.$ 

It is evident that $c_F(U,V) \ge \langle 0, 0 \rangle = 0$ and $c_F(U, V) = \langle u, v \rangle \le \|u\|\|v\| \le 1.$

Now if $c_F(U, V) = 1$, we get 
$$c_F(U, V) =\langle u, v \rangle = \|u\|\|v\|=1 \implies u = \alpha v$$ 
for some $\alpha \ge 0$, and then 
$$u \in \left(\hat{U} \cap (\hat{U} \cap \hat{V})^\perp \right) \cap \left(\hat{V} \cap (\hat{U} \cap \hat{V})^\perp \right) = \{0\},$$ 
thus $\langle u, v \rangle=0,$ a contradiction with $\langle u, v \rangle=1$. Therefore, $c_F(U, V) < 1.$ 
\end{proof}
\vspace*{0.5cm}

We now state a key inequality known as the error bound condition or linear regularity for two affine subspaces. This result relates the distance to the intersection \( U \cap V \) to the individual distances to \( U \) and \( V \). The constant \( r(U,V) \), which depends explicitly on the cosine of the Friedrichs angle, governs how well-separated the subspaces are. This inequality is fundamental for our convergence analysis: it ensures that approximate feasibility with respect to each individual set implies approximate feasibility with respect to their intersection, thus enabling global convergence results from local stepwise progress.

\begin{proposition}\label{proptwosp} Let $U$ and $V$ be affine subspaces of $\RR^n$ with $U \cap V \neq \emptyset.$  There exists 
$$
r(U,V)=\sqrt{1+ \frac{4}{1-c_F(U,V)^2}}  >1
$$ such that 
$$\emph{dist}(x,U \cap V) \le r(U,V) \max \{\emph{dist}(x, U), \emph{dist}(x, V)\}$$ 
for all $x.$ 
\end{proposition}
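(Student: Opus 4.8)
The plan is to reduce to the linear case and exploit the Friedrichs angle via the projection-operator characterization. Pick $\hat z \in U\cap V$ and translate so that, writing $\hat U = U - \hat z$, $\hat V = V - \hat z$, $\hat L := \hat U \cap \hat V$, and $y := x - \hat z$, the claim becomes $\operatorname{dist}(y,\hat L)\le r(U,V)\max\{\operatorname{dist}(y,\hat U),\operatorname{dist}(y,\hat V)\}$; note distances are translation-invariant and $\operatorname{dist}(y,\hat U)=\|y-P_{\hat U}y\|$ since $P_{\hat U}$ is linear. Decompose $y$ using the orthogonal splitting $\RR^n = \hat L \oplus \hat L^\perp$ and set $a := P_{\hat L}y$, so it suffices to bound $\|y-a\| = \|P_{\hat L^\perp}y\|$ in terms of the two one-set distances; replacing $y$ by $y - a$, we may assume $y \in \hat L^\perp$ from the start, and then $\operatorname{dist}(y,\hat L)=\|y\|$.

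The core step is the inequality $\|y\|^2 \le \|y - P_{\hat U}y\|^2 + \|y - P_{\hat V}y\|^2 + 4\,\tfrac{1}{1-c_F^2}\,\max\{\|y-P_{\hat U}y\|^2,\|y-P_{\hat V}y\|^2\}$, which gives the stated $r(U,V)=\sqrt{1+4/(1-c_F^2)}$ after bounding each of the first two squared terms by the max. To get it, write $u := P_{\hat U}y$ and $v := P_{\hat V}y$, both in $\hat L^\perp$'s complement structure: more precisely decompose $u = u_0 + u_1$ with $u_0 \in \hat L$, $u_1 \in \hat U\cap\hat L^\perp$, and similarly $v = v_0 + v_1$. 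Using $y\in\hat L^\perp$ and $\langle y-u,\hat U\rangle = 0$ one shows $u_0 = 0$, hence $u \in \hat U\cap(\hat U\cap\hat V)^\perp$ and likewise $v\in\hat V\cap(\hat U\cap\hat V)^\perp$; then by definition of the Friedrichs angle, $\langle u,v\rangle \le c_F\|u\|\|v\|$. Now expand: $\|y\|^2 = \langle y, y - u\rangle + \langle y, u\rangle$, and since $y - u \perp \hat U \ni u$ we get $\langle y,u\rangle = \|u\|^2$; similarly $\|y\|^2 = \|v\|^2 + \langle y, y-v\rangle$. The remaining work is to combine $\|y\|^2 = \|u\|^2 + \langle y - u, y\rangle$ with the analogous $v$-identity and the angle bound $\langle u,v\rangle\le c_F\|u\|\|v\|$ to control $\|u\|$ and $\|v\|$ — from $\|y-u\|\cdot\|y\|\ge \langle y-u,y\rangle = \|y\|^2-\|u\|^2 \ge 0$ and the parallelogram/Cauchy–Schwarz manipulations one extracts $\|y\| \le \tfrac{2}{\sqrt{1-c_F^2}}\max\{\|y-u\|,\|y-v\|\}$, and the $\sqrt{1+\cdots}$ form then follows by also retaining the trivial bound.

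The main obstacle is the bookkeeping in that last step: establishing cleanly that $u,v$ land in the subspaces over which the Friedrichs supremum is taken (i.e.\ that their $\hat L$-components vanish once $y\in\hat L^\perp$), and then squeezing the quantitative constant $2/\sqrt{1-c_F^2}$ out of the two Pythagorean identities together with $\langle u,v\rangle \le c_F\|u\|\|v\|$. A convenient way to organize this is: let $\alpha := \|y-u\|$, $\beta := \|y-v\|$, $M := \max\{\alpha,\beta\}$; from $\|u\|^2 = \langle y,u\rangle \le \|y\|\,\|u\|$ deduce $\|u\|\le\|y\|$ and similarly $\|v\|\le\|y\|$, while $\|y\|^2 = \|y-u\|^2 + \|u\|^2$ (Pythagoras, as $y - u \perp u$) gives $\|u\|^2 = \|y\|^2 - \alpha^2$ and $\|v\|^2 = \|y\|^2 - \beta^2$; finally $\|y\|^2 = \langle y, u\rangle$ rewritten as $\|y\|^2 = \langle y - v, u\rangle + \langle v, u\rangle \le \|y-v\|\|u\| + c_F\|u\|\|v\|$ yields, after substituting $\|u\|,\|v\|\le\|y\|$ on the right where needed and the exact expressions elsewhere, a quadratic inequality in $\|y\|$ whose solution is exactly the asserted bound. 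Every remaining estimate is routine algebra with Cauchy–Schwarz.
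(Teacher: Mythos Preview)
Your reduction to the linear case with $y\in\hat L^\perp$, followed by showing $u=P_{\hat U}y\in\hat U\cap\hat L^\perp$ and $v=P_{\hat V}y\in\hat V\cap\hat L^\perp$ so that the Friedrichs bound $\langle u,v\rangle\le c_F\|u\|\|v\|$ applies, is exactly the content of the paper's argument; the paper achieves the same placement via Lemma~\ref{lemafflem1} (which gives $P_{U\cap V}(u)=P_{U\cap V}(v)=s$) instead of an explicit translation, but the two organizations are equivalent. In both cases the final ingredient is the Pythagorean identity $\|y\|^2=\|y-u\|^2+\|u\|^2$.

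Where your write-up slips is in the last paragraph: the line ``$\|y\|^2=\langle y,u\rangle$'' is a typo---since $y-u\perp u$ you only get $\langle y,u\rangle=\|u\|^2$, not $\|y\|^2$. With that corrected your Cauchy--Schwarz route gives $\|u\|\le\beta+c_F\|v\|$ and symmetrically $\|v\|\le\alpha+c_F\|u\|$, but chasing these into a quadratic in $\|y\|$ does not transparently produce the stated constant $\sqrt{1+4/(1-c_F^2)}$ (and the intermediate claim $\|y\|\le 2M/\sqrt{1-c_F^2}$ is left unjustified). The paper's endgame is cleaner and worth adopting: from the triangle inequality $\|u-v\|\le\|y-u\|+\|y-v\|=\alpha+\beta$ together with
\[
\|u-v\|^2=\|u\|^2+\|v\|^2-2\langle u,v\rangle\ge (1-c_F^2)\|u\|^2+(c_F\|u\|-\|v\|)^2\ge(1-c_F^2)\|u\|^2,
\]
one gets $\|u\|^2\le(\alpha+\beta)^2/(1-c_F^2)\le 4M^2/(1-c_F^2)$, and then $\|y\|^2=\alpha^2+\|u\|^2\le M^2\bigl(1+4/(1-c_F^2)\bigr)$, which is exactly the asserted bound.
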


\begin{proof} Denote $u = P_U(x), \: v = P_V(x)$, and $s = P_{U \cap V}(x)$. 

By Lemma \ref{lemafflem1}, $s=P_{U \cap V}(u) = P_{U \cap V}(v)$, and by Proposition \ref{propprojaff}-(i), we have
$$s-u \in \hat{U} \cap (\hat{U} \cap \hat{V})^\perp, \: s-v \in \hat{V} \cap (\hat{V} \cap \hat{U})^\perp,$$
and as a result $\langle s-u, s-v \rangle \le c_F(U,V) \|s-u\|\|s-v\|.$

Next, observe that 
\begin{align*}
    (\|x-u\| + \|x-v\|)^2 &\ge \|(s-u)-(s-v)\|^2
    \\ & = \|s-u\|^2+\|s-v\|^2-2 \langle s-u, s-v \rangle
    \\ & \ge \|s-u\|^2 + \|s-v\|^2 - 2 c_F(U,V) \|s-u\| \|s-v\|
    \\ & = (1-c_F(U,V)^2)\|s-u\|^2 + \left( c_F(U,V)\|s-u\|-\|s-v\| \right)^2
    \\ & \ge (1-c_F(U,V)^2)\|s-u\|^2,
\end{align*}
which yields, for $\displaystyle \alpha^2 = \frac{1}{1-c_F(U,V)^2}$,
\begin{equation}\label{subound}
    \|s-u\|^2 \le \alpha^2 (\|x-u\| + \|x-v\|)^2.
\end{equation}
Again by Proposition \ref{propprojaff}-(i), using the fact that $s \in U$, we have
\begin{equation}\label{eqsusu}
\langle x-u,s-u \rangle=0.
\end{equation}
Now we can bound $\text{dist}(x,U \cap V)$ as follows:
\begin{align*}
    \|s - x\|^2 &\stackrel{\eqref{eqsusu}}{=} \|s-u\|^2 + \|x-u\|^2
    \\ & \stackrel{\eqref{subound}}{\le} \alpha^2 (\|x-u\| + \|x-v\|)^2+\|x-u\|^2
    \\ & \le (4\alpha^2+1) \max \{\|x-u\|^2, \|x-v\|^2\}.
\end{align*}
This implies 
\[\|s - x\| \le \sqrt{4\alpha^2+1} \max \{\|x-u\|, \|x-v\|\},\]
which is the desired result.
\end{proof}

We now extend the previous error bound condition for $m$ sets.

\begin{proposition}\label{inducdist}
There exists $r_m>1$ such that 
$$
\displaystyle \|x-s^\star\| \le r_m \max \left\{\emph{dist}(x, U_i),i=1,\ldots,m \right\}
$$ 
for all $x.$    
\end{proposition}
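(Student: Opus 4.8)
The plan is to prove the statement by induction on $m$, using Proposition \ref{proptwosp} as the base/inductive engine. For $m=1$ the claim is trivial with $r_1 = 1$ (indeed $s^\star = P_{U_1}(x)$ and $\|x - s^\star\| = \mathrm{dist}(x,U_1)$). For $m = 2$, Proposition \ref{proptwosp} gives exactly the claim with $r_2 = r(U_1,U_2) > 1$, since $s^\star = P_S(x) = P_{U_1 \cap U_2}(x)$. The key idea for the inductive step is to group the first $m-1$ subspaces: set $V := \bigcap_{i=1}^{m-1} U_i$, so that $S = V \cap U_m$. Assuming $V$ is a nonempty affine subspace (it contains $S \neq \emptyset$), we may apply Proposition \ref{proptwosp} to the pair $(V, U_m)$ to obtain a constant $r(V,U_m) > 1$ with
$$
\|x - s^\star\| = \mathrm{dist}(x, V \cap U_m) \le r(V,U_m)\, \max\{\mathrm{dist}(x,V),\, \mathrm{dist}(x,U_m)\}
$$
for all $x$.

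Next I would bound $\mathrm{dist}(x,V)$ using the inductive hypothesis applied to the $m-1$ subspaces $U_1,\dots,U_{m-1}$: there is $r_{m-1} > 1$ with $\mathrm{dist}(x,V) = \|x - P_V(x)\| \le r_{m-1} \max\{\mathrm{dist}(x,U_i) : i = 1,\dots,m-1\}$ for all $x$. (Strictly speaking the inductive hypothesis as stated in Proposition \ref{inducdist} is about the specific intersection $S$ of all $m$ sets; to make the induction clean I would restate it as: for any finite collection of affine subspaces with nonempty intersection, there is a constant, depending only on the collection, bounding the distance to the intersection by a multiple of the max of the individual distances. This is the natural strengthening that makes the recursion go through, and it is what the base case $m=2$ from Proposition \ref{proptwosp} already provides.) Combining the two displayed inequalities,
$$
\|x - s^\star\| \le r(V,U_m) \max\{r_{m-1} M_{m-1},\, \mathrm{dist}(x,U_m)\} \le r(V,U_m)\, r_{m-1}\, \max\{\mathrm{dist}(x,U_i) : i = 1,\dots,m\},
$$
where $M_{m-1} = \max\{\mathrm{dist}(x,U_i): i=1,\dots,m-1\}$ and we used $r_{m-1} > 1$ to absorb the last term. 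Thus $r_m := r(V,U_m)\, r_{m-1} > 1$ works.

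The only genuine subtlety — which I expect to be the main point needing care rather than a real obstacle — is making sure the recursion is set up on the right statement: Proposition \ref{proptwosp} is stated for two subspaces, so to reuse it at each stage one must observe that $V = \bigcap_{i=1}^{m-1} U_i$ is again an affine subspace with $V \cap U_m = S \neq \emptyset$, and one must carry the hypothesis in the strengthened ``any collection'' form described above so that $r_{m-1}$ is available for $V$. Everything else is routine: the inequality $\max\{a\alpha, b\} \le \alpha \max\{a,b\}$ for $\alpha \ge 1$, and the fact that each $r(\cdot,\cdot) > 1$ so the product $r_m$ is $> 1$. No new analytic estimates are required — the heavy lifting (the Friedrichs-angle error bound) has already been done in Proposition \ref{proptwosp}.
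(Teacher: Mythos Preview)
Your proposal is correct and follows essentially the same approach as the paper: induction on $m$ with Proposition~\ref{proptwosp} as the base case, grouping the first $m-1$ subspaces into a single affine subspace $V$ and applying Proposition~\ref{proptwosp} to the pair $(V,U_m)$, then invoking the inductive hypothesis on $V$ and combining via $r_m = r(V,U_m)\,r_{m-1}$. Your remark about needing the induction hypothesis in the ``any collection'' form is a valid point of rigor that the paper's proof leaves implicit.
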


\begin{proof} We prove the result by induction on $m \ge 2$. 
For $m=2$, this is Proposition \ref{proptwosp}. Assuming the result holds
for $k$ sets, we apply Proposition \ref{proptwosp} for $U= \displaystyle \bigcap_{i=1}^{k} U_i$ and $V=U_{k+1}$ to get
\begin{align*}
\|x-s^\star\| = \text{dist}(x,S) &\le r' \max \{\text{dist}(x, U), \text{dist}(x, V)\}
\\ & \le r' \max \left \{r_k \max \left\{\text{dist}(x, U_i),i \le k \right\}, \text{dist}(x, U_{k+1}) \right\}
\\ & \le r'r_k \max \left\{ \text{dist}(x, U_i),i \le k+1\right\},
\end{align*}
for some $r'>1$, $r_k>1$,
which completes the induction step. 
\end{proof}

\vspace*{0.5cm}

In what follows, we denote by  
\[
W_x := \text{aff}\{ x, R_{U_1}(x), R_{U_2}(x), \ldots, R_{U_m}(x) \}
\]  
the affine subspace generated by the current iterate \( x \) and its reflections across the sets \( U_1, \ldots, U_m \). This space plays a central role in the structure of circumcenter-based methods, as it contains all the candidate points used in computing the next iterate. The next lemma shows that projecting any point from \( W_x \) onto the intersection set \( S \) yields the same point \( s^\star \), highlighting a key invariance that underlies the well-posedness and geometric consistency of the circumcenter construction.

\begin{lemma}\label{lemwxb} For any $w \in W_x$, we have
$$P_S(w) = s^\star.
$$
\end{lemma}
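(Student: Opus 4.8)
The plan is to show that $s^\star = P_S(x)$ is the common projection onto $S$ of every point in $W_x = \aff\{x, R_{U_1}(x), \ldots, R_{U_m}(x)\}$. Since $P_S$ is an affine operator by Proposition \ref{propprojaff}-(iv), and an affine map is determined by its values on a spanning set, it suffices to check two things: first, that $P_S(x) = s^\star$, which is just the definition \eqref{defstar}; and second, that $P_S(R_{U_i}(x)) = s^\star$ for each $i = 1, \ldots, m$. Indeed, if $P_S$ agrees with the constant map $w \mapsto s^\star$ on the affine-generating set $\{x, R_{U_1}(x), \ldots, R_{U_m}(x)\}$, then by affinity it agrees with that constant on all affine combinations, i.e. on all of $W_x$.

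For the second point, fix $i$ and apply Lemma \ref{lemafflem1} with $U = U_i$ and $V = S$ — more carefully, the lemma is stated for two affine subspaces $U, V$ and concludes $P_{U \cap V}(R_U(x)) = P_{U \cap V}(x)$. Here I would rather use it in the form where $S = \bigcap_{j=1}^m U_j \subseteq U_i$, so $U_i \cap S = S$, giving directly $P_S(R_{U_i}(x)) = P_S(x) = s^\star$. If one prefers to invoke the lemma literally with only two subspaces, set $V := \bigcap_{j \neq i} U_j$ so that $U_i \cap V = S$; then Lemma \ref{lemafflem1} yields $P_S(R_{U_i}(x)) = P_{U_i \cap V}(R_{U_i}(x)) = P_{U_i \cap V}(x) = P_S(x) = s^\star$. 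Either way, the reflection of $x$ across $U_i$ does not move its projection onto the intersection.

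Combining: for arbitrary $w \in W_x$, write $w = \lambda_0 x + \sum_{i=1}^m \lambda_i R_{U_i}(x)$ with $\sum_{i=0}^m \lambda_i = 1$; then by affinity of $P_S$,
$$
P_S(w) = \lambda_0 P_S(x) + \sum_{i=1}^m \lambda_i P_S(R_{U_i}(x)) = \lambda_0 s^\star + \sum_{i=1}^m \lambda_i s^\star = s^\star.
$$
I expect no real obstacle here; the only point requiring a little care is the bookkeeping in applying Lemma \ref{lemafflem1} — making sure the hypothesis $U_i \cap S \neq \emptyset$ holds (it does, since $S \neq \emptyset$ and $S \subseteq U_i$) and that one correctly identifies the intersection of the two chosen subspaces with $S$. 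The affineness of $P_S$ on affine subspaces is the structural fact doing all the work, and it is already established.
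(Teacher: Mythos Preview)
Your proof is correct and follows essentially the same approach as the paper: invoke Lemma~\ref{lemafflem1} to get $P_S(R_{U_i}(x)) = P_S(x) = s^\star$ for each $i$, then use the affineness of $P_S$ from Proposition~\ref{propprojaff}-(iv) to extend this to every affine combination in $W_x$. Your added care in identifying the two subspaces to which Lemma~\ref{lemafflem1} is applied (taking $V = \bigcap_{j\neq i} U_j$ so that $U_i \cap V = S$) is a welcome clarification that the paper leaves implicit.
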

\begin{proof}
From Lemma \ref{lemafflem1}, $P_S(x^{(i)}) = P_S(R_{U_i}(x)) = s^\star$ for each $i=1,\ldots,m$. By definition, $w \in W_x$ can be written as $\displaystyle w=\sum_{i=0}^m \alpha_i x^{(i)}$ for real numbers $\alpha_i$ with $\displaystyle \sum_{i=0}^m \alpha_i = 1$. Now, using Proposition \ref{propprojaff}-(iv),
    we have $$P_S(w)=P_S\left(\sum_{i=0}^m \alpha_i x^{(i)}\right)=\sum_{i=0}^m \alpha_i P_S(x^{(i)}) = \sum_{i=0}^m \alpha_i s^\star = s^\star$$
    which achieves the proof of (i).
\end{proof}

The following lemma establishes two fundamental properties of the F-SPM operator. The first ensures that each iterate generated by F-SPM remains within the affine subspace \( W_x \), which is spanned by the initial point and its reflections. The second asserts that all iterates project to the same point \( s^\star \in S \), which reveals a key stability property: the iterates may vary, but their projection onto the intersection set remains invariant. This behavior underpins the convergence analysis and will be essential in proving eventual convergence to \( s^\star \).

\begin{lemma}\label{proptsm} The following assertions hold for F-SPM:
\begin{enumerate}
    \item[(i)] $\text{T}_{\text{F-SPM}}(x) \in W_x$.
    \item[(ii)] $P_S(\text{T}_{\text{F-SPM}}^k(x))_k = s^\star.$
\end{enumerate}
\end{lemma}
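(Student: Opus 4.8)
The plan is to prove the two assertions of Lemma \ref{proptsm} in sequence, both being essentially immediate consequences of results already established.

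For part (i), I would recall the notation $x^{(0)} := x$ and $x^{(i)} := R_{U_i}(x)$ for $i = 1, \ldots, m$. The F-SPM operator is $\text{T}_{\text{F-SPM}}(x) = p_0 x + \sum_{i=1}^m p_i P_{U_i}(x)$. The only subtlety is that F-SPM is written in terms of the \emph{projections} $P_{U_i}(x)$, whereas $W_x$ is defined via the \emph{reflections} $R_{U_i}(x)$. To bridge this, I would use the identity $P_{U_i}(x) = \tfrac12\big(x + R_{U_i}(x)\big) = \tfrac12 x^{(0)} + \tfrac12 x^{(i)}$, which follows directly from the definition $R_{U_i}(x) = 2P_{U_i}(x) - x$. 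Substituting, $\text{T}_{\text{F-SPM}}(x) = p_0 x^{(0)} + \sum_{i=1}^m p_i\big(\tfrac12 x^{(0)} + \tfrac12 x^{(i)}\big) = \big(p_0 + \tfrac12\sum_{i=1}^m p_i\big) x^{(0)} + \sum_{i=1}^m \tfrac{p_i}{2} x^{(i)}$. Then I check the coefficients sum to $1$: $p_0 + \tfrac12\sum_{i=1}^m p_i + \sum_{i=1}^m \tfrac{p_i}{2} = p_0 + \sum_{i=1}^m p_i = 1$ by the normalization hypothesis. Hence $\text{T}_{\text{F-SPM}}(x)$ is an affine combination of $x^{(0)}, x^{(1)}, \ldots, x^{(m)}$, so it lies in $W_x = \text{aff}\{x^{(0)}, \ldots, x^{(m)}\}$.

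For part (ii), I would argue by induction on $k$. For $k = 0$, $\text{T}_{\text{F-SPM}}^0(x) = x$ and $P_S(x) = s^\star$ by the definition \eqref{defstar}. For the inductive step, suppose $P_S(\text{T}_{\text{F-SPM}}^{k-1}(x)) = s^\star$. Set $y := \text{T}_{\text{F-SPM}}^{k-1}(x)$. By part (i) applied with $y$ in place of $x$, we have $\text{T}_{\text{F-SPM}}^k(x) = \text{T}_{\text{F-SPM}}(y) \in W_y = \text{aff}\{y, R_{U_1}(y), \ldots, R_{U_m}(y)\}$. By Lemma \ref{lemwxb} applied at the base point $y$ (whose projection onto $S$ is $s^\star$ by the inductive hypothesis), every point of $W_y$ projects onto $S$ at $P_S(y) = s^\star$; in particular $P_S(\text{T}_{\text{F-SPM}}^k(x)) = s^\star$. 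This closes the induction. I should note here that Lemma \ref{lemwxb} as stated in the excerpt is phrased for $W_x$ with $x$ the initial point, but its proof only uses that $P_S$ of the base point is $s^\star$ (via Lemma \ref{lemafflem1}, which holds for arbitrary base point) together with affinity of $P_S$; so the invocation at base point $y$ is legitimate, and I would make this explicit.

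The main obstacle — really the only point requiring care — is the mismatch between the projection-based definition of the F-SPM step and the reflection-based definition of $W_x$; once the identity $P_{U_i}(x) = \tfrac12(x + R_{U_i}(x))$ is invoked and the affine-coefficient bookkeeping is done, everything else is routine. A secondary point worth stating cleanly is the correct reading of assertion (ii): the notation $P_S(\text{T}_{\text{F-SPM}}^k(x))_k = s^\star$ should be understood as ``$P_S(\text{T}_{\text{F-SPM}}^k(x)) = s^\star$ for all $k \ge 0$'', which is what the induction delivers.
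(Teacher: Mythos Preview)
Your proof is correct and follows essentially the same approach as the paper: part (i) uses the identity $P_{U_i}(x)=\tfrac12 x+\tfrac12 x^{(i)}$ and checks the affine coefficients sum to $1$, and part (ii) is the same induction via Lemma~\ref{lemwxb} and part (i). Your explicit remark that Lemma~\ref{lemwxb} is applied at the base point $y=\text{T}_{\text{F-SPM}}^{k-1}(x)$ (and why this is legitimate) is a helpful clarification that the paper leaves implicit.
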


\begin{proof}
\begin{enumerate}
    \item[(i)] Because $P_{U_i}(x) = \frac12 x + \frac12 x^{(i)}$ for each $i \ge 1$, it follows that 
    \begin{align*}
        \text{T}_{\text{F-SPM}}(x) &= \sum_{i=0}^m p_iP_{U_i}(x) 
        \\ &= p_0x + \sum_{i=1}^m \left(\frac12 p_i x + \frac12 p_ix^{(i)} \right)
        \\ &= \left(p_0 + \sum_{i=1}^m \frac12 p_i \right)x + \sum_{i=1}^m \frac12 p_ix^{(i)}.
    \end{align*}
    It then suffices to observe that
    $$\left(p_0 + \sum_{i=1}^m \frac12 p_i \right) + \sum_{i=1}^m \frac12 p_i =\sum_{i=0}^m p_i = 1.$$
    \item[(ii)] We prove the result by induction on $k$. The case $k=0$ is obvious. Assuming the result holds for $k-1$, in view of Lemma \ref{lemwxb} and item (i), 
    $$P_S \left(\text{T}_{\text{F-SPM}}^k(x) \right) = P_S \left(\text{T}_{\text{F-SPM}}^{k-1}(x) \right)=s^\star$$
    for $k \ge 1$, completing the induction. 
\end{enumerate}
\end{proof}

\subsection{Convergence of F-SPM}

We now turn to the main convergence result for F-SPM. The theorem below establishes that the sequence generated by the method converges linearly to the projection of the initial point onto the intersection set \( S \). The linear rate depends on the geometric configuration of the sets through the Friedrichs angle and the error bound condition established earlier.

\begin{theorem}\label{convspm} We have $\|\text{T}_{\text{F-SPM}}(x)- s^\star\| \le \displaystyle r_{\text{F-SPM}}\|x-s^\star\|$ for some $r_{\text{F-SPM}} < 1$ and therefore
$$
\|\text{T}_{\text{F-SPM}}^k(x)- s^\star\| \le \displaystyle r_{\text{F-SPM}}^k\|x-s^\star\|
$$
which shows that the sequence
$(\text{T}_{\text{F-SPM}}^k(x))_k$ generated by F-SPM converges
linearly to $s^\star$, the projection of the initial point $x$ onto $S$.
\end{theorem}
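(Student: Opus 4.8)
The plan is to establish the single-step contraction $\|\text{T}_{\text{F-SPM}}(x)-s^\star\|\le r_{\text{F-SPM}}\|x-s^\star\|$ for an arbitrary $x\in\RR^n$ and then iterate it. Write $z_0=x$ and $z_i=P_{U_i}(x)$ for $i=1,\dots,m$, so that $\text{T}_{\text{F-SPM}}(x)=\sum_{i=0}^m p_i z_i$ is a convex combination of the $z_i$. Since $S\subseteq U_i$ we have $U_i\cap S=S$, so Lemma \ref{lemafflem1} (with $U=U_i$, $V=S$) gives $P_S(z_i)=P_S(x)=s^\star$ for every $i$; in particular $s^\star\in S\subseteq U_i$ for all $i$. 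Because $\|\cdot\|^2$ is convex, Jensen's inequality yields
$$\|\text{T}_{\text{F-SPM}}(x)-s^\star\|^2\le\sum_{i=0}^m p_i\|z_i-s^\star\|^2=p_0\|x-s^\star\|^2+\sum_{i=1}^m p_i\|P_{U_i}(x)-s^\star\|^2.$$

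Next I would exploit orthogonality. Because $s^\star\in U_i$, Proposition \ref{propprojaff}(ii) applied to the affine subspace $U_i$ at the point $s^\star$ gives $\|x-s^\star\|^2=\text{dist}(x,U_i)^2+\|P_{U_i}(x)-s^\star\|^2$, i.e. $\|P_{U_i}(x)-s^\star\|^2=\|x-s^\star\|^2-\text{dist}(x,U_i)^2$. Substituting this and using $p_0+\sum_{i=1}^m p_i=1$, the bound collapses to
$$\|\text{T}_{\text{F-SPM}}(x)-s^\star\|^2\le\|x-s^\star\|^2-\sum_{i=1}^m p_i\,\text{dist}(x,U_i)^2\le\|x-s^\star\|^2-p_{\min}\,\max_{1\le i\le m}\text{dist}(x,U_i)^2,$$
where $p_{\min}:=\min_{1\le i\le m}p_i>0$. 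The error bound of Proposition \ref{inducdist} gives $\max_i\text{dist}(x,U_i)^2\ge\|x-s^\star\|^2/r_m^2$, so with $r_{\text{F-SPM}}:=\sqrt{1-p_{\min}/r_m^2}$ we obtain $\|\text{T}_{\text{F-SPM}}(x)-s^\star\|\le r_{\text{F-SPM}}\|x-s^\star\|$. Since $0<p_{\min}\le 1/m<r_m^2$, the ratio $p_{\min}/r_m^2$ lies in $(0,1)$, so $r_{\text{F-SPM}}\in[0,1)$.

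Finally, to pass to $k$ iterations I would apply the single-step bound with $x$ replaced by $w:=\text{T}_{\text{F-SPM}}^{k-1}(x)$: the bound was derived for an arbitrary point and its own projection onto $S$, and Lemma \ref{proptsm}(ii) guarantees $P_S(w)=s^\star$, so the contraction is always directed toward the same target. A one-line induction then gives $\|\text{T}_{\text{F-SPM}}^k(x)-s^\star\|\le r_{\text{F-SPM}}^k\|x-s^\star\|$, whence linear convergence to $s^\star=P_S(x)$.

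I expect the computations above to be routine; the step carrying the actual content — the only one that turns Jensen's plain non-expansiveness into a strict contraction — is the use of the linear-regularity error bound (Proposition \ref{inducdist}) to control $\|x-s^\star\|$ by $\max_i\text{dist}(x,U_i)$. The accompanying subtlety is ensuring the contraction factor is strictly below $1$ and uniform in $x$, which is precisely why the estimate is organised around $\max_i\text{dist}(x,U_i)$ and the uniform weight lower bound $p_{\min}$.
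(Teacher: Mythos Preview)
Your proof is correct and rests on the same two ingredients as the paper's: the Pythagorean identity $\|P_{U_i}(x)-s^\star\|^2=\|x-s^\star\|^2-\text{dist}(x,U_i)^2$ from Proposition~\ref{propprojaff}(ii), and the error bound of Proposition~\ref{inducdist}. The only difference is in how the convex combination is handled. The paper applies the triangle inequality directly to $\|\text{T}_{\text{F-SPM}}(x)-s^\star\|$ and singles out one index $j$ achieving the maximum in Proposition~\ref{inducdist}, arriving at $r_{\text{F-SPM}}=1-p_j+p_j\sqrt{1-1/r_m^2}$; you instead apply Jensen to the squared norm and keep all distance terms, arriving at $r_{\text{F-SPM}}=\sqrt{1-p_{\min}/r_m^2}$. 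By concavity of the square root one has $\sqrt{(1-p)\cdot 1+p(1-\alpha)}\ge(1-p)+p\sqrt{1-\alpha}$, so the paper's constant (once uniformised by replacing $p_j$ with $p_{\min}$) is never larger than yours; its bound is marginally sharper. On the other hand, your route delivers an $x$-independent contraction factor in one stroke, whereas the paper's $j$ depends on $x$ and must tacitly be replaced by $p_{\min}$ before iterating --- a point you handle more cleanly via Lemma~\ref{proptsm}(ii). Both arguments are equally short and neither generalises beyond the other.
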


\begin{proof} For any $i$, since $s^{\star} \in U_i$, Proposition \ref{propprojaff}-(ii) gives
\begin{align}
    \|P_{U_i}(x)-s^\star\|^2+\|x-P_{U_i}(x)\|^2 = \|x-s^\star\|^2 \label{pyth}
\end{align}
which yields 
\begin{equation}
    \|P_{U_i}(x)-s^\star\| \le \|x-s^\star\|. \label{five}
\end{equation}

By Proposition \ref{inducdist}, for some $1 \le j \le m$ it holds that 
$$\|x-s^\star\| \le r_m \|x-P_{U_j}(x)\|$$
for some $r_m>1$. Note that since $j$ is between 1 and $m$, we have by the
assumption that $p_i >0$ for $i=1,\ldots,m$, that
\begin{equation}\label{pjpos}
p_j >0.
\end{equation}
Together with \eqref{pyth} written for $i=j$, this gives
\begin{align*}
    \|P_{U_j}(x)-s^\star\|^2+\frac{1}{r_m^2}\|x-s^\star\|^2 &\le
    \|P_{U_j}(x)-s^\star\|^2+ 
    \|x-P_{U_j}(x)\|^2 \\
& = \|x-s^\star\|^2 \nonumber \\
\end{align*}

which can be rewritten
\begin{equation}
 \|P_{U_j}(x)-s^\star\| \le \sqrt{1-\frac{1}{r_m^2}} \|x-s^\star\|. \label{six}
\end{equation}

Finally,
\begin{align*}
    \|\text{T}_{\text{F-SPM}}(x)- s^\star\| &= \left\| \left(p_0x+\sum_{i=1}^m p_iP_{U_i}(x)\right) - \sum_{i=0}^m p_i s^\star \right\| 
    \\ & = \left\|p_0(x-s^\star)+ \sum_{i=1}^m p_i(P_{U_i}(x)-s^\star) \right\|
    \\ & \le p_0\|x-s^\star\|+ \sum_{i=1}^m p_i\|P_{U_i}(x)-s^\star\|
    \\ & \le p_0\|x-s^\star\|+ p_j \sqrt{1-\frac{1}{r_m^2}} \|x-s^\star\|+ \sum_{i=1, i \neq j}^m p_i\|x-s^\star\| \tag{by \eqref{five} and \eqref{six}}
    \\ & = \left(1-p_j + p_j\sqrt{1-\frac{1}{r_m^2}} \right) \|x-s^\star\|.
\end{align*}
Therefore the theorem holds with 
\begin{equation}\label{finalspm}
\displaystyle r_{\text{F-SPM}} = 1-p_j + p_j\sqrt{1-\frac{1}{r_m^2}} < 1
\end{equation}
where the strict inequality in \eqref{finalspm}
comes from \eqref{pjpos}.
\end{proof}

In the special case of Cimmino's method, the optimal convergence rate $r_{\text{F-SPM}}^{\star}$ was investigated in \cite{spm_bound} and found to be
\begin{align}\label{bestcimm}
r_{\text{F-SPM}}^{\star}=\cos_p (C_p, D_p)^2 
\end{align}
where we set $U_0 = \RR^n$ and
\[\cos_p (C_p, D_p) = \sup \left\{ \left\| \sum_{j=0}^{m} p_j u_j \right\| : u_j \in \hat{U}_j \cap \left( \bigcap_{i=0}^m \hat{U}_i \right)^\perp, \, j \in \{0,1,\ldots, m\}, \, \sum_{j=0}^{m} p_j \| u_j \|^2 = 1 \right\}\]
is the cosine of the Friedrichs angle between $\displaystyle C_p:=\prod_{j=0}^{m} \hat{U}_j$ and $D_p:= \left\{ (x)_{i=0}^m : x \in \RR^n\right\}.$

\section{Convergence of the Parallelized Circumcentered Reflection Method method}
\label{sec:headings}

\subsection{\pCRM{} method}

Let us consider again
$m$ affine subspaces
$U_1, U_2, \ldots, U_m$ of $\RR^n$ with nonempty intersection $S$.
The \pCRM{} method to find a point in $S$
is given below.

	\noindent\rule[0.5ex]{1\columnwidth}{1pt}
	
	\pCRM
	
	\noindent\rule[0.5ex]{1\columnwidth}{1pt}
	\begin{itemize}
		\item [0.] Let $x \in \RR^n$ 
  and $U_1, U_2, \ldots, U_m$ affine subspaces
  be given.
	    \item[1.] Compute $R_{U_1}(x)$, $R_{U_2}(x)$, $\ldots$, $R_{U_m}(x)$.
		\item[2.] Compute the point $\text{C}_{\text{P}}(x) \in \RR^n$ with the following two properties:
\begin{enumerate}
    \item[(P1)] $\text{C}_{\text{P}}(x) \in W_x := \text{aff}\{ x, R_{U_1}(x), R_{U_2}(x), \ldots, R_{U_m}(x) \}$;
    \item[(P2)] $\text{C}_{\text{P}}(x)$ is equidistant to $x, R_{U_1}(x), R_{U_2}(x), \ldots, R_{U_m}(x)$.
\end{enumerate}
  		\item[3.] Do $x \leftarrow \text{C}_{\text{P}}(x)$ and
go to Step 1.
	\end{itemize}
	\rule[0.5ex]{1\columnwidth}{1pt}

\pCRM{} iterations compute the sequence
$
(\text{C}_{\text{P}}^k(x))_k
$ given by
$$
\text{C}_{\text{P}}^0(x) = x,\quad
\text{C}_{\text{P}}^k(x) = \text{C}_{\text{P}}\left(\text{C}_{\text{P}}^{k-1}(x) \right),\;k \geq 1.
$$
At each iteration, we need to compute the circumcenter $\text{C}_{\text{P}}(x)$
of points $x$, $R_{U_1}(x)$, $R_{U_2}(x)$, $\ldots$, $R_{U_m}(x)$  satisfying (P1) and (P2). 
Each iteration of \pCRM{} requires
therefore more computational effort than F-SPM but we expect less iterations to obtain a solution or an approximate solution of \ref{CFP}  with \pCRM.

We illustrate an iteration of the \pCRM{} method in $\RR^2$ in Figure \ref{fig:pcrmr2}
and in $\RR^3$ in Figure \ref{fig:pcrmr3}.
The figures correspond to the cases where
$U_1$ and $U_2$ are two hyperplanes. The initial point
is $x$ (in $\RR^2$ in Figure \ref{fig:pcrmr2}
and in $\RR^3$ in Figure \ref{fig:pcrmr3})
and we represent on the figures the two affine spaces $A$ and $B$ and five points:
\begin{itemize}
\item an arbitrary initial point $x$;
\item the reflection $R_{U_1}(x)$ of $x$ with respect to $U_1$, used in the computation of the next iterate
$C_P(x)$ of \pCRM;
\item the reflection $R_{U_2}(x)$ of $x$ with respect to $U_2$, used in the computation of the next iterate
$C_P(x)$ of \pCRM;
\item the next iterate $C_P(x)$ (a point in the intersection of $U_1$ and $U_2$).
\end{itemize}

As illustrated in Figures \ref{fig:pcrmr2} and \ref{fig:pcrmr3}, if \( U_1, \ldots, U_m \) are hyperplanes and the initial point \( x \) does not belong to any \( U_i \), then \pCRM{} converges in a single iteration to a point in the intersection \( S \). Indeed, in this setting, each \( U_i \) can be characterized as the set of points equidistant from \( x \) and its reflection \( R_{U_i}(x) \). The reflection satisfies
\[
R_{U_i}(x) = x + 2(P_{U_i}(x) - x),
\]
where \( P_{U_i}(x) \) denotes the orthogonal projection onto \( U_i \). A straightforward computation shows that for any \( z \in \RR^n \),
\[
\|z - R_{U_i}(x)\|^2 = \|z - x\|^2 + 4\langle P_{U_i}(x) - x, z - P_{U_i}(x) \rangle.
\]
Thus, the condition \( \|z - x\| = \|z - R_{U_i}(x)\| \) holds if and only if
\[
\langle P_{U_i}(x) - x, z - P_{U_i}(x) \rangle = 0.
\]
This scalar product relation expresses that the vector \( z - P_{U_i}(x) \) is orthogonal to the normal direction \( P_{U_i}(x) - x \), implying that \( z \) lies on the hyperplane orthogonal to \( P_{U_i}(x) - x \) and passing through \( P_{U_i}(x) \), which defines \( U_i \).

As a result, the intersection \( S = \bigcap_{i=1}^m U_i \) consists of the points simultaneously equidistant from \( x \) and all reflections \( R_{U_i}(x) \). Therefore, the circumcenter \( \text{C}_{\text{P}}(x) \) lies in \( S \), and \pCRM{} achieves convergence in a single iteration.

\begin{figure}
    \centering
    \includegraphics[width=0.7\textwidth]{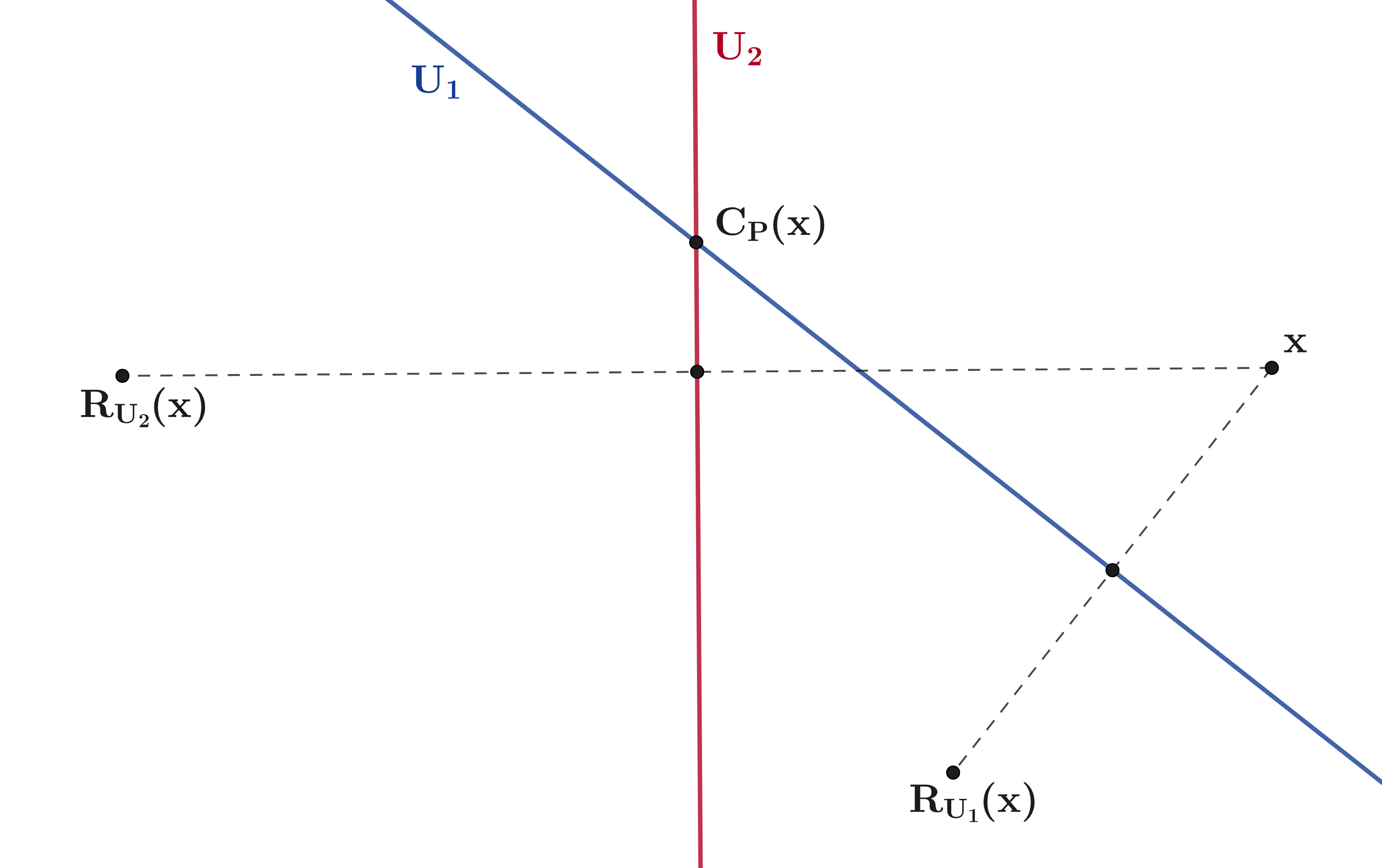}
    \caption{\pCRM{} in $\RR^2$}
    \label{fig:pcrmr2}
\end{figure}

\begin{figure}
    \centering
    \includegraphics[width=0.7\textwidth]{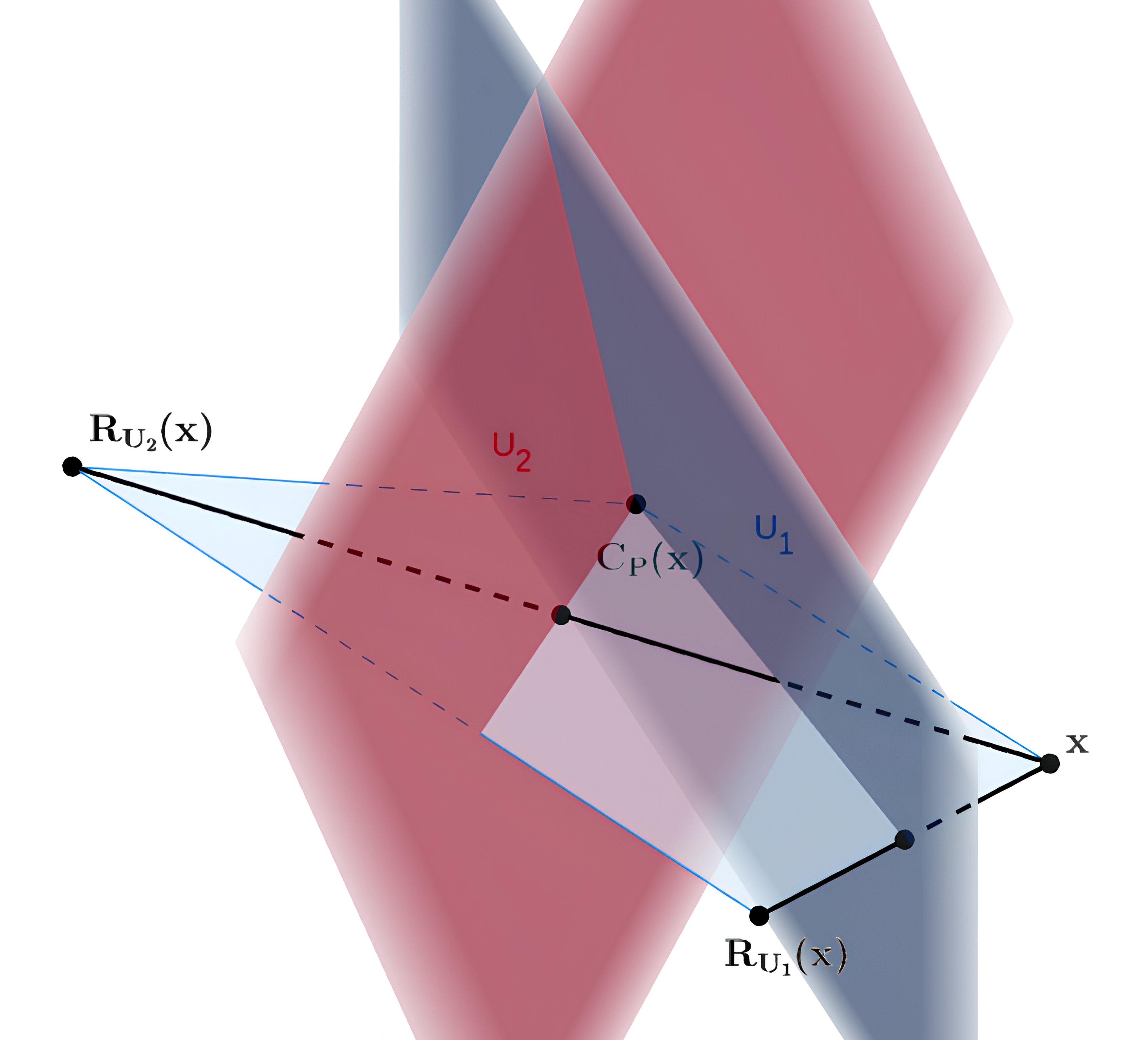}
    \caption{\pCRM{} in $\RR^3$}
    \label{fig:pcrmr3}
\end{figure}

We now explain how to compute the circumcenter of a set of points.
Given \( m+1 \) points \( x_0, x_1, x_2, \ldots, x_m \in \RR^n \), their circumcenter, denoted by \( \text{circ}(x_0, x_1, x_2, \ldots, x_m) \), is defined as the unique point that lies within the affine subspace spanned by these points and that is equidistant from each of them.
By this definition, we must have, for some $\alpha \in \RR^m,$
\[\text{circ}(x_0, x_1, x_2, \ldots, x_m) = x_0 + \sum_{j=1}^m \alpha_j (x_j - x_0),\]
and, for all $i,$
\[
\|\text{circ}(x_0, x_1, x_2, \ldots, x_m)-x_i\| = \|\text{circ}(x_0, x_1, x_2, \ldots, x_m)-x_0\|.
\]
These conditions lead to an \( m \times m \) linear system in the coefficients \( \alpha \in \RR^m \), where the \( i \)-th equation is
\[
\sum_{j=1}^m \alpha_j \langle x_j - x_0, x_i - x_0 \rangle = \frac{1}{2} \| x_i - x_0 \|^2,\]
or
\[
\begin{pmatrix}
\langle x_1 - x_0, x_1 - x_0 \rangle & \langle x_2 - x_0, x_1 - x_0 \rangle & \cdots & \langle x_m - x_0, x_1 - x_0 \rangle \\
\langle x_1 - x_0, x_2 - x_0 \rangle & \langle x_2 - x_0, x_2 - x_0\rangle & \cdots & \langle x_m - x_0, x_2 - x_0 \rangle \\
\vdots & \vdots & \ddots & \vdots \\
\langle x_1 - x_0, x_m - x_0 \rangle & \langle x_2 - x_0, x_m - x_0 \rangle & \cdots & \langle x_m - x_0, x_m - x_0 \rangle
\end{pmatrix}
\begin{pmatrix}
\alpha_1 \\
\alpha_2 \\
\vdots \\
\alpha_m
\end{pmatrix}
= \frac{1}{2}
\begin{pmatrix}
\| x_1 - x_0 \|^2 \\
\| x_2 - x_0 \|^2 \\
\vdots \\
\| x_m - x_0 \|^2
\end{pmatrix}.
\]
Solving this system determines the circumcenter \(\text{circ}(x_0, x_1, x_2, \ldots, x_m)\). However, the uniqueness of \(\alpha\) relies on the linear independence of the vectors \( x_i - x_0 \) (or, equivalently, on the assumption that $x_0,x_1,\ldots,x_m$ are affinely independent), which obviously is not always true.

In Theorem \ref{cvcspmt}, we prove the convergence
of the sequence $(\text{C}_{\text{P}}^k(x))_k$
generated by \pCRM{} to a solution
of \ref{LBAP}, which is the projection
\begin{equation}\label{defstar}
s^\star=P_S(x)
\end{equation}
of the initial point $x$ onto $S$. To prove this theorem, we start with some preliminary results in the next subsection.

\subsection{Preliminary results}

We begin by establishing the existence and uniqueness of the parallel circumcenter, identifying \( \text{C}_{\text{P}}(x) \) as the projection of any point \( s \in S \) onto the affine subspace \( W_x \). This demonstrates that \( \text{C}_{\text{P}}(x) \) is the closest point to \( S \) within \( W_x \). Notably, \( \text{C}_{\text{P}}(x) \) is at least as close to \( S \) as the F-SPM point \( T_{\text{F-SPM}}(x) \).

For short, we use the notation $x^{(i)} := R_{U_i}(x)$ and $x^{(0)} := x$. 

\begin{lemma}\label{crmuni} For any $x \in \RR^n$,
\begin{enumerate}
    \item[(i)] $\text{C}_{\text{P}}(x)=P_{W_x}(s)$ satisfies properties (P1) and (P2), for any $s \in S$.
    \item[(ii)] $\text{C}_{\text{P}}(x)$ is unique and therefore
    $\text{C}_{\text{P}}(x) = P_{W_x}(s)$.
\end{enumerate}
\end{lemma}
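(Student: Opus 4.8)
The plan is to prove both parts together, since existence and uniqueness are intertwined.

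\textbf{Part (i): $P_{W_x}(s)$ satisfies (P1) and (P2).} Fix any $s \in S$ and set $c := P_{W_x}(s)$. Property (P1) is immediate, since $c \in W_x$ by definition of the projection. For (P2), I would argue that $c$ is equidistant from all the points $x^{(0)}, x^{(1)}, \ldots, x^{(m)}$ that generate $W_x$. First note that, by Lemma \ref{lemwxb} (or directly Lemma \ref{lemafflem1}), $P_S(x^{(i)}) = s^\star$ for every $i$, and more to the point, applying Proposition \ref{propprojaff}-(iii) to the affine subspace $S$: since $x^{(i)} = R_{U_i}(x)$, Proposition \ref{propprojaff}-(iii) applied to $U_i$ gives $\|x - t\| = \|x^{(i)} - t\|$ for every $t \in U_i$, and in particular for every $t \in S \subseteq U_i$. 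Thus all the generators $x^{(i)}$ are equidistant from every point of $S$; write $\rho_i := \|x^{(i)} - s\|$ and observe $\rho_0 = \rho_1 = \cdots = \rho_m =: \rho$. Now decompose, using that $c = P_{W_x}(s)$ and $x^{(i)} \in W_x$, via Proposition \ref{propprojaff}-(ii) applied to the affine subspace $W_x$:
$$
\|x^{(i)} - s\|^2 = \|x^{(i)} - c\|^2 + \|c - s\|^2 \quad \text{for each } i,
$$
since $c - s \perp W_x - c$ and $x^{(i)} - c \in W_x - c$. Hence $\|x^{(i)} - c\|^2 = \rho^2 - \|c - s\|^2$ is the same for all $i$, which is exactly (P2).

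\textbf{Part (ii): uniqueness.} Suppose $c$ and $c'$ both satisfy (P1) and (P2). By (P1), both lie in $W_x = \aff\{x^{(0)}, \ldots, x^{(m)}\}$, so $c' - c \in W_x - c =: L$, a linear subspace spanned by the vectors $x^{(i)} - x^{(0)}$. By (P2), $\|c - x^{(i)}\| = \|c - x^{(0)}\|$ and $\|c' - x^{(i)}\| = \|c' - x^{(0)}\|$ for all $i$; expanding $\|c - x^{(i)}\|^2 - \|c - x^{(0)}\|^2 = 0$ gives the linear relation $2\langle c, x^{(0)} - x^{(i)} \rangle = \|x^{(0)}\|^2 - \|x^{(i)}\|^2$, and the same relation holds with $c$ replaced by $c'$. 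Subtracting, $\langle c' - c, x^{(i)} - x^{(0)} \rangle = 0$ for every $i$, so $c' - c \perp L$. But $c' - c \in L$, hence $c' - c = 0$. Combined with part (i), which already exhibited $P_{W_x}(s)$ as a point satisfying (P1) and (P2) for any $s \in S$, this forces $\text{C}_{\text{P}}(x) = P_{W_x}(s)$ (and incidentally shows $P_{W_x}(s)$ is independent of the choice of $s \in S$).

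The main obstacle is getting the invariance $\|x^{(i)} - s\| = \|x^{(0)} - s\|$ for all $s \in S$ cleanly from Proposition \ref{propprojaff}-(iii): one must remember that $S \subseteq U_i$ so that (iii), which a priori only guarantees $\|x - t\| = \|R_{U_i}(x) - t\|$ for $t \in U_i$, does apply to points of $S$; the case $i = 0$ is trivial since $x^{(0)} = x$. Everything else is a routine Pythagorean decomposition plus a linear-algebra argument for uniqueness, neither of which requires the generators $x^{(i)}$ to be affinely independent — which is precisely the point of defining $\text{C}_{\text{P}}(x)$ via (P1)–(P2) rather than via the circumcenter linear system.
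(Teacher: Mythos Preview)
Your proof is correct and follows essentially the same route as the paper: part~(i) uses Proposition~\ref{propprojaff}-(iii) (with $s\in S\subseteq U_i$) to get $\|x-s\|=\|x^{(i)}-s\|$ and then the Pythagorean relation of Proposition~\ref{propprojaff}-(ii) on $W_x$, while part~(ii) expands the equidistance conditions to deduce $\langle c'-c,\,x^{(i)}-x\rangle=0$ and concludes via $L\cap L^\perp=\{0\}$. The only cosmetic difference is that the paper packages your subtraction step into the four-point identity $\|a-b\|^2-\|b-c\|^2+\|c-d\|^2-\|d-a\|^2=2\langle a-c,d-b\rangle$.
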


\begin{proof}
\begin{enumerate}
    \item[(i)] It is evident that $P_{W_x}(s)$ has the property (P1). Moreover, for any $\displaystyle s \in S$ and $1 \le i \le m$, we have $s \in U_i$, whence $\|x - s\| = \|x^{(i)} - s\|$ by Proposition \ref{propprojaff}-(iii).
    Now using Proposition \ref{propprojaff}-(ii) twice, we have for every $i=1,\ldots,m$, that
    $$\|x - P_{W_x}(s)\|^2 = \|x - s\|^2 - \|s - P_{W_x}(s)\|^2 = \|x^{(i)} - s\|^2 - \|s - P_{W_x}(s)\|^2 = \|x^{(i)} - P_{W_x}(s)\|^2,$$
    yielding $\|x - P_{W_x}(s)\| = \|x^{(i)} - P_{W_x}(s)\|$, which  shows that $P_{W_x}(s)$
    satisfies (P2).
    \item[(ii)] If $w, w' \in W_x$ are such that $\|w-x^{(i)}\| = \|w-x\|$ and $\|w'-x^{(i)}\| = \|w'-x\|$ for all $i=1,\ldots,m$, using the identity
    \begin{equation}\label{identiparal}
    \|a-b\|^2-\|b-c\|^2+\|c-d\|^2-\|d-a\|^2 = 2 \langle a-c, d-b \rangle
    \end{equation}
    we get 
    $$0=\|w-x\|^2-\|x-w'\|^2+\|w'-x^{(i)}\|^2-\|x^{(i)}-w\|^2 = 2 \langle w-w',x^{(i)}-x \rangle,$$
    which means that $w-w' \in \text{span}\{x^{(0)}-x,x^{(1)}-x,\ldots,x^{(m)}-x\}^{\perp}$, but 
    $$w, w' \in \text{aff}\{x^{(0)},x^{(1)},\ldots,x^{(m)}\} \implies w-w' \in \text{span}\{x^{(0)}-x,x^{(1)}-x,\ldots,x^{(m)}-x\},$$
    hence $w-w'=0.$ 
\end{enumerate}
\end{proof}

Let us state another important property of the points in the affine space $W_x.$

\begin{lemma}\label{lemwx} For any $w \in W_x$, we have:
$$
\|\text{C}_{\text{P}}(x)-s^\star\|^2 = \|w-s^\star\|^2 - \|w-\text{C}_{\text{P}}(x)\|^2.
$$
\end{lemma}
\begin{proof} 
Lemma \ref{crmuni}-(i) shows that $\text{C}_{\text{P}}(x) = P_{W_x}(s^\star)$ since $s^{\star} \in S$. Now the result follows immediately from Proposition \ref{propprojaff}-(ii) applied to $W_x$ and $s^\star.$
\end{proof}
\vspace*{0.5cm}

The following lemma demonstrates a property that is typical of algorithms designed to solve best approximation problems: applying multiple compositions of the \pCRM{} operator \( \text{C}_{\text{P}}(\cdot) \) does not alter the projection onto the intersection \(S\). Specifically, for any \( x \in \RR^n \) and \( k \in \mathbb{N} \), we have
\[
P_{S}(\text{C}_{\text{P}}^k(x)) := P_{S}(\underbrace{\text{C}_{\text{P}}(\cdots \text{C}_{\text{P}}(\text{C}_{\text{P}}}_{k \text{ times}}(x)) \cdots)) = P_S(x).
\]

\begin{lemma}\label{induccrm} For any $k \ge 0$, one has $P_S(\text{C}_{\text{P}}^k(x)) = s^\star$.
\end{lemma}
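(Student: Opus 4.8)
The plan is to argue by induction on $k$, exactly in the style of Lemma \ref{proptsm}(ii) for F-SPM. The base case $k=0$ is immediate since $\text{C}_{\text{P}}^0(x)=x$ and $s^\star=P_S(x)$ by definition \eqref{defstar}. For the inductive step, assume $P_S(\text{C}_{\text{P}}^{k-1}(x))=s^\star$. Writing $y:=\text{C}_{\text{P}}^{k-1}(x)$, we need to show $P_S(\text{C}_{\text{P}}(y))=P_S(y)$, i.e. that one application of the parallel circumcenter operator does not disturb the projection onto $S$.

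For that step I would proceed as follows. By Lemma \ref{crmuni}(i)--(ii), $\text{C}_{\text{P}}(y)=P_{W_y}(s)$ for \emph{any} $s\in S$; in particular $\text{C}_{\text{P}}(y)\in W_y$. So it suffices to prove that $P_S(w)=P_S(y)$ for every $w\in W_y$. This is precisely Lemma \ref{lemwxb} applied with the base point $y$ in place of $x$: for each $i$, Lemma \ref{lemafflem1} gives $P_S(R_{U_i}(y))=P_S(y)$, hence all the generating points $y, R_{U_1}(y),\ldots,R_{U_m}(y)$ of $W_y$ have the same projection onto $S$; since any $w\in W_y$ is an affine combination $\sum_{i=0}^m\alpha_i y^{(i)}$ with $\sum_i\alpha_i=1$, Proposition \ref{propprojaff}(iv) (affinity of $P_S$) yields $P_S(w)=\sum_i\alpha_i P_S(y^{(i)})=\sum_i\alpha_i P_S(y)=P_S(y)$. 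Taking $w=\text{C}_{\text{P}}(y)$ and invoking the induction hypothesis $P_S(y)=s^\star$ closes the step.

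So in compact form: $P_S(\text{C}_{\text{P}}^k(x))=P_S\big(\text{C}_{\text{P}}(\text{C}_{\text{P}}^{k-1}(x))\big)=P_S\big(\text{C}_{\text{P}}^{k-1}(x)\big)=s^\star$, where the middle equality uses $\text{C}_{\text{P}}(\text{C}_{\text{P}}^{k-1}(x))\in W_{\text{C}_{\text{P}}^{k-1}(x)}$ together with Lemma \ref{lemwxb} (stated for an arbitrary base point), and the last equality is the induction hypothesis.

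I do not anticipate a genuine obstacle here — all the heavy lifting (the invariance $P_{U\cap V}\circ R_U=P_{U\cap V}$, affinity of $P_S$, the $W_x$-invariance lemma, and the identification of $\text{C}_{\text{P}}(x)$ as a point of $W_x$) has already been done. The only minor care needed is to make sure Lemma \ref{lemwxb} is read with its base point $x$ replaced by the current iterate $\text{C}_{\text{P}}^{k-1}(x)$; since that lemma's proof uses nothing about $x$ beyond the definition $W_x=\text{aff}\{x,R_{U_1}(x),\ldots,R_{U_m}(x)\}$ and Lemma \ref{lemafflem1}, this substitution is legitimate. A purely cosmetic alternative is to fold the whole argument into one induction that simultaneously re-derives the $W_y$-invariance, but invoking Lemma \ref{lemwxb} directly is cleaner.
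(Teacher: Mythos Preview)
Your proof is correct and follows essentially the same approach as the paper: induction on $k$, with the inductive step obtained by noting that $\text{C}_{\text{P}}^{k}(x)\in W_{\text{C}_{\text{P}}^{k-1}(x)}$ and invoking Lemma~\ref{lemwxb} at the base point $\text{C}_{\text{P}}^{k-1}(x)$ together with the induction hypothesis. Your extra discussion justifying the substitution of the base point in Lemma~\ref{lemwxb} is sound but more detailed than what the paper records.
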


\begin{proof}
 We prove the result by induction on $k$. The result trivially holds for $k=0$. Now assuming that the result holds for $k$, since $$\text{C}_{\text{P}}^{k+1}(x) \in W_{\text{C}_{\text{P}}^k(x)},
 $$
 by Lemma \ref{lemwxb} we get $P_S(\text{C}_{\text{P}}^{k+1}(x))=P_S(\text{C}_{\text{P}}^k(x))=s^{\star}$.
\end{proof}

The next result provides two key identities that reveal how the \pCRM{} operator reduces distance to the solution. The first part expresses a Pythagorean-type relation, quantifying the contraction of the distance to \( s^\star \) in terms of the step taken by the method. The second part shows that each application of \pCRM{} simultaneously reduces the distance to the individual sets \( U_i \), reinforcing the interpretation of the method as making globally coherent progress toward feasibility. These relations will be fundamental in proving linear convergence of the \pCRM{} sequence. 

\begin{lemma}\label{lemdistuni} For any $x \in \RR^n,$
\begin{enumerate}
    \item[(i)] $\displaystyle \|\text{C}_{\text{P}}(x)-s^\star\|^2 = \|x-s^\star\|^2 - \|x-\text{C}_{\text{P}}(x)\|^2$.
    \item[(ii)] $\emph{dist}(\text{C}_{\text{P}}(x), U_i)^2 + \emph{dist}(x, U_i)^2 \le \|x-\text{C}_{\text{P}}(x)\|^2$, for all $i=1,\ldots,m$.
\end{enumerate}
\end{lemma}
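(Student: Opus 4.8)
The plan is to handle the two items separately, with item (i) being essentially immediate and item (ii) requiring a short computation based on the midpoint structure of the reflections.

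\textbf{Item (i).} This is a direct specialization of Lemma \ref{lemwx}. Since $W_x = \text{aff}\{x, R_{U_1}(x), \ldots, R_{U_m}(x)\}$ contains $x$ itself (indeed $x = x^{(0)}$ is one of the affine generators), I would simply invoke Lemma \ref{lemwx} with $w = x$ to obtain
\[
\|\text{C}_{\text{P}}(x)-s^\star\|^2 = \|x-s^\star\|^2 - \|x-\text{C}_{\text{P}}(x)\|^2,
\]
which is exactly the claim. No further work is needed here.

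\textbf{Item (ii).} Write $c := \text{C}_{\text{P}}(x)$ and recall $x^{(i)} = R_{U_i}(x)$, so that $P_{U_i}(x) = \tfrac12 (x + x^{(i)})$ is the midpoint of $x$ and $x^{(i)}$. First I would expand $\|c - P_{U_i}(x)\|^2$ using the midpoint identity $\|\tfrac12(a+b)\|^2 = \tfrac12\|a\|^2 + \tfrac12\|b\|^2 - \tfrac14\|a-b\|^2$ applied with $a = c - x$ and $b = c - x^{(i)}$, yielding
\[
\|c - P_{U_i}(x)\|^2 = \tfrac12\|c - x\|^2 + \tfrac12\|c - x^{(i)}\|^2 - \tfrac14\|x^{(i)} - x\|^2.
\]
Now property (P2) of the parallel circumcenter gives $\|c - x\| = \|c - x^{(i)}\|$, and since $\text{dist}(x, U_i) = \|x - P_{U_i}(x)\| = \tfrac12\|x^{(i)} - x\|$, this collapses to
\[
\|c - P_{U_i}(x)\|^2 + \text{dist}(x, U_i)^2 = \|c - x\|^2.
\]
Finally, since $P_{U_i}(x) \in U_i$, we have $\text{dist}(c, U_i) \le \|c - P_{U_i}(x)\|$, and substituting this bound into the identity above gives the desired inequality
\[
\text{dist}(c, U_i)^2 + \text{dist}(x, U_i)^2 \le \|x - c\|^2.
\]

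The only subtle point — and the place where an inequality rather than an equality enters — is the last step: when $U_i$ is not a hyperplane, the set of points equidistant from $x$ and $x^{(i)}$ is strictly larger than $U_i$, so $c$ need not project onto $U_i$ at $P_{U_i}(x)$; we only know $\text{dist}(c, U_i) \le \|c - P_{U_i}(x)\|$. Everything else is a routine identity manipulation, so I do not anticipate any real obstacle.
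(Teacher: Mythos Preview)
Your proof is correct and follows essentially the same approach as the paper: item (i) is identical (invoke Lemma \ref{lemwx} with $w=x$), and for item (ii) both you and the paper establish the key identity $\|c-P_{U_i}(x)\|^2+\text{dist}(x,U_i)^2=\|x-c\|^2$ and then use $\text{dist}(c,U_i)\le\|c-P_{U_i}(x)\|$. The only cosmetic difference is how that identity is reached: the paper applies the polarization identity \eqref{identiparal} to deduce $\langle P_{U_i}(x)-c,\,P_{U_i}(x)-x\rangle=0$ and then invokes Pythagoras, whereas you use the Apollonius/median formula directly; both routes hinge on the same two facts, namely (P2) and $P_{U_i}(x)=\tfrac12(x+x^{(i)})$.
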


\begin{proof} \begin{enumerate}
    \item[(i)] This item directly follows from Lemma \ref{lemwx} since $x \in W_x$.
    \item[(ii)] Using Proposition \ref{propprojaff}-(iii), for any $i$ we have $\|P_{U_i}(x)-x\|=\|P_{U_i}(x)-x^{(i)}\|$, while by construction $\|\text{C}_{\text{P}}(x)-x\|=\|\text{C}_{\text{P}}(x)-x^{(i)}\|.$ 
    
    Using  \eqref{identiparal} again, we obtain
    \begin{align}
    0 &= \|P_{U_i}(x)-x\|^2-\|x-\text{C}_{\text{P}}(x)\|^2+\|\text{C}_{\text{P}}(x)-x^{(i)}\|^2-\|x^{(i)}-P_{U_i}(x)\|^2 \nonumber
    \\ &=2\langle P_{U_i}(x)-\text{C}_{\text{P}}(x), x^{(i)}-x \rangle \nonumber
    \\ 
    &=2\langle P_{U_i}(x)-\text{C}_{\text{P}}(x), R_{U_i}(x)-x \rangle \nonumber\\
    &=4\langle P_{U_i}(x)-\text{C}_{\text{P}}(x), P_{U_i}(x)-x \rangle. \label{ortpcp}
    \end{align}
    Then, by Pythagoras theorem, using the fact, by \eqref{ortpcp}, that $P_{U_i}(x)-\text{C}_{\text{P}}(x)$ and $P_{U_i}(x)-x$ are orthogonal, we obtain
    \begin{equation}\label{idenpcp}
        \|P_{U_i}(x)-\text{C}_{\text{P}}(x)\|^2 + \|x-P_{U_i}(x)\|^2 = \|x-\text{C}_{\text{P}}(x)\|^2. 
    \end{equation}
    Finally, since $P_{U_i}(x) \in U_i,$ 
    $$\text{dist}(\text{C}_{\text{P}}(x), U_i) \le \|P_{U_i}(x)-\text{C}_{\text{P}}(x)\|,$$ 
    which, together with \eqref{idenpcp}, implies the result. 
\end{enumerate}
\end{proof}

We now establish a key inequality that serves as the foundation for the linear convergence of \pCRM. The following lemma shows that the circumcenter operator \( \text{C}_{\text{P}} \) acts as a contraction with respect to the distance to the solution \( s^\star \). That is, each iteration strictly reduces the distance to \( s^\star \) by a fixed factor \( r_P < 1 \), independent of the starting point. This result is the critical step that enables us to derive a global linear convergence rate for the method.

\begin{lemma} \label{cvcspm} For some $r_P <1,$ it holds that
\begin{equation}\label{cpstr}
\|\text{C}_{\text{P}}(x) - s^\star\| \le r_P\: \|x - s^\star\|.
\end{equation}

\end{lemma}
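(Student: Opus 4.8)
The plan is to assemble the estimate directly from three facts already in hand: the multi-set error bound (Proposition \ref{inducdist}), the Pythagorean identity for the parallel circumcenter step (Lemma \ref{lemdistuni}-(i)), and the simultaneous distance-reduction inequality (Lemma \ref{lemdistuni}-(ii)). The organizing observation is that, by Lemma \ref{induccrm} applied with $k=1$, the point $s^\star = P_S(x)$ is simultaneously the projection of $x$ and of $\text{C}_{\text{P}}(x)$ onto $S$, so every quantity below refers to the same target point.

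First I would apply Proposition \ref{inducdist} at the current iterate $x$: there is a constant $r_m > 1$, independent of $x$, with $\|x - s^\star\| \le r_m \max_{i} \emph{dist}(x, U_i)$. Choosing an index $j$ that attains this maximum gives $\emph{dist}(x, U_j)^2 \ge \frac{1}{r_m^2}\|x - s^\star\|^2$. Next I would invoke Lemma \ref{lemdistuni}-(ii) with $i = j$, which in particular yields $\emph{dist}(x, U_j)^2 \le \|x - \text{C}_{\text{P}}(x)\|^2$. Combining these two inequalities produces the lower bound $\|x - \text{C}_{\text{P}}(x)\|^2 \ge \frac{1}{r_m^2}\|x - s^\star\|^2$ on the length of the step taken by \pCRM.

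Finally I would substitute this lower bound into the identity of Lemma \ref{lemdistuni}-(i), namely $\|\text{C}_{\text{P}}(x) - s^\star\|^2 = \|x - s^\star\|^2 - \|x - \text{C}_{\text{P}}(x)\|^2$, to obtain $\|\text{C}_{\text{P}}(x) - s^\star\|^2 \le \bigl(1 - \tfrac{1}{r_m^2}\bigr)\|x - s^\star\|^2$. Taking square roots establishes \eqref{cpstr} with $r_P = \sqrt{1 - 1/r_m^2}$, which lies in $[0,1)$ exactly because $r_m > 1$, and is independent of $x$.

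There is no genuinely hard step; the work is entirely in bookkeeping. The one point that needs care is ensuring Proposition \ref{inducdist} is used with $s^\star$ being the common projection of both $x$ and $\text{C}_{\text{P}}(x)$ (supplied by Lemma \ref{induccrm}), and noting that the degenerate case $x \in S$ is harmless: then each $R_{U_i}(x) = x$, so $W_x = \{x\}$ forces $\text{C}_{\text{P}}(x) = x = s^\star$ and both sides of \eqref{cpstr} vanish.
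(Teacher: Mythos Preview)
Your proof is correct and follows essentially the same route as the paper: combine Lemma \ref{lemdistuni}-(i), Lemma \ref{lemdistuni}-(ii), and Proposition \ref{inducdist} to obtain $\|\text{C}_{\text{P}}(x)-s^\star\|^2 \le (1-1/r_m^2)\|x-s^\star\|^2$, yielding $r_P=\sqrt{1-1/r_m^2}$. Your appeal to Lemma \ref{induccrm} and the discussion of the degenerate case $x\in S$ are harmless but unnecessary here, since $s^\star$ is already fixed as $P_S(x)$ and Lemma \ref{lemdistuni} is stated directly in terms of it.
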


\begin{proof} By Lemma \eqref{lemdistuni}-(i),(ii) and Proposition \ref{inducdist}, we have
\begin{align*}
    \|\text{C}_{\text{P}}(x)-s^\star\|^2 &= \|x-s^\star\|^2 - \|x-\text{C}_{\text{P}}(x)\|^2
    \\ &\le \|x-s^\star\|^2 - \max(\text{dist}(x, U_i)^2,i=1,\ldots,m)
    \\ &\le \|x-s^\star\|^2 - \frac{1}{r_m^2} \|x-s^\star\|^2,
\end{align*}
where $r_m>1$.
Therefore \eqref{cpstr} holds with
$$r_P = \sqrt{1-\frac{1}{r_m^2}}<1.$$  
\end{proof}

\subsection{Convergence of \pCRM}

The following theorem establishes the linear convergence of the \pCRM{} method. Building on the contraction property derived earlier, it guarantees that the sequence \( (\text{C}_{\text{P}}^k(x))_k \) converges to the projection of the initial point onto the intersection set \( S \), with a linear rate governed by the geometry of the subspaces involved. This result confirms the efficiency and robustness of \pCRM{} in solving affine feasibility problems.

\begin{theorem}\label{cvcspmt} Let \( x \in \RR^n \) be given. Then, the \pCRM{} sequence \( (\text{C}_{\text{P}}^k(x))_k \) converges linearly to \( s^{\star} \), the projection of the initial point $x$ onto $S$.
\end{theorem}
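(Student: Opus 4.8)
The heavy lifting has already been done in Lemma \ref{cvcspm}, which provides the contraction estimate $\|\text{C}_{\text{P}}(x)-s^\star\| \le r_P\|x-s^\star\|$ with $r_P<1$. So the plan is simply to iterate this bound and then read off convergence.

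First I would apply Lemma \ref{cvcspm} at the point $\text{C}_{\text{P}}^{k-1}(x)$ instead of $x$. Its next iterate is $\text{C}_{\text{P}}(\text{C}_{\text{P}}^{k-1}(x)) = \text{C}_{\text{P}}^k(x)$, and the relevant solution is still $s^\star$: this is exactly the content of Lemma \ref{induccrm}, which guarantees $P_S(\text{C}_{\text{P}}^{k-1}(x)) = s^\star$, so that $s^\star$ plays for $\text{C}_{\text{P}}^{k-1}(x)$ the same role it plays for $x$ in the definition \eqref{defstar}. Hence Lemma \ref{cvcspm} gives
\[
\|\text{C}_{\text{P}}^k(x)-s^\star\| \le r_P\,\|\text{C}_{\text{P}}^{k-1}(x)-s^\star\|
\]
for every $k\ge 1$.

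Next, a trivial induction on $k$ yields the geometric decay $\|\text{C}_{\text{P}}^k(x)-s^\star\| \le r_P^{\,k}\|x-s^\star\|$. Since $r_P<1$, the right-hand side tends to $0$, so $\text{C}_{\text{P}}^k(x)\to s^\star$, and the convergence is linear with rate $r_P$. Finally, $s^\star = P_S(x)$ by \eqref{defstar}, which is precisely the claim that the limit is the projection of the initial point onto $S$.

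The main (and only real) subtlety is the bookkeeping point that Lemma \ref{cvcspm} must be legitimately applicable at each iterate $\text{C}_{\text{P}}^{k-1}(x)$ with the \emph{same} target point $s^\star$; this is not automatic from the statement of Lemma \ref{cvcspm} alone, but is supplied by Lemma \ref{induccrm} (the invariance of $P_S$ under $\text{C}_{\text{P}}$). Beyond that, everything is a one-line induction, so I expect no obstacle.
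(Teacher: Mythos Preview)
Your proposal is correct and follows essentially the same route as the paper: iterate the contraction inequality of Lemma \ref{cvcspm} to obtain $\|\text{C}_{\text{P}}^k(x)-s^\star\|\le r_P^{\,k}\|x-s^\star\|$ and conclude. You are in fact slightly more explicit than the paper in invoking Lemma \ref{induccrm} to justify that the target point $s^\star$ remains $P_S(\text{C}_{\text{P}}^{k-1}(x))$ at every step, a bookkeeping point the paper's proof leaves implicit.
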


\begin{proof} By Lemma \ref{cvcspm}, we conclude that, for any $k \ge 1,$
\begin{align*}
    \|\text{C}_{\text{P}}^k(x) - s^\star\| \le r_P\|\text{C}_{\text{P}}^{k-1}(x) - s^\star\|,
\end{align*}
implying
$$\|\text{C}_{\text{P}}^k(x) - s^\star\| \le r_P^k\|x - s^\star\|$$
for $r_P<1$ and the result follows. 
\end{proof}

\vspace*{0.5cm}

We end up this section establishing a relationship between the rates of convergence of the methods F-SPM and \pCRM: \pCRM{} is at least as good as F-SPM.
Relation \eqref{spmcrm} below shows a stronger result: the rate of convergence of \pCRM{} is at least as good as the best possible convergence rate 
$r_{\text{F-SPM}}^{\star}$
of
F-SPM which is given by \eqref{bestcimm} in the space case of Cimmino's method.

\begin{proposition} We have $r_P \le r_{\text{F-SPM}}.$
\end{proposition}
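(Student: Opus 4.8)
The plan is to read off the two rate constants and compare them directly, since by construction they are explicit functions of \emph{the same} error-bound constant $r_m$ produced by Proposition~\ref{inducdist}. From \eqref{finalspm} in the proof of Theorem~\ref{convspm} we have $r_{\text{F-SPM}} = 1 - p_j + p_j\sqrt{1 - 1/r_m^2}$ for an index $j$ with $1\le j\le m$, hence $p_j > 0$; and the proof of Lemma~\ref{cvcspm} gives $r_P = \sqrt{1 - 1/r_m^2}$ with that same $r_m$. So the first thing to do is to make explicit that the $r_m$ appearing in the two estimates is literally the one from Proposition~\ref{inducdist}, so that the formulas live on a common footing.

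Next I would record the two elementary facts that drive the inequality. Writing $t := \sqrt{1 - 1/r_m^2}$, we have $0 \le t < 1$ because $r_m > 1$; and $p_j \le 1$, since $p_j \le \sum_{i=0}^m p_i = 1$ together with $p_i \ge 0$ for every $i$. With these in hand the conclusion is a one-line computation:
\[
r_{\text{F-SPM}} - r_P \;=\; \bigl(1 - p_j + p_j t\bigr) - t \;=\; (1-t)(1-p_j) \;\ge\; 0,
\]
so $r_P \le r_{\text{F-SPM}}$. If one prefers to read $r_{\text{F-SPM}}$ as a single constant valid for all $x$ (the index $j$ in the proof of Theorem~\ref{convspm} a priori depends on $x$), observe that $p_j \mapsto 1 - p_j + p_j t$ has slope $t-1<0$, hence is decreasing, so the worst admissible F-SPM rate is $1 - p_{\min} + p_{\min} t$ with $p_{\min} := \min_{1\le i\le m} p_i > 0$, and the same factorization shows it still dominates $r_P$.

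There is no genuinely hard step here; the only points needing a moment of care are the identification of the constant $r_m$ across the two bounds and the trivial estimate $p_j \le 1$. I would also flag what this argument does \emph{not} give: the stronger comparison hinted at in the surrounding discussion — that $r_P$ is no larger than the \emph{optimal} F-SPM rate $r_{\text{F-SPM}}^{\star} = \cos_p(C_p,D_p)^2$ of \eqref{bestcimm} — cannot be obtained by this bookkeeping, because $\sqrt{1-1/r_m^2}$ is only an upper bound on $r_P$ coming from a particular error-bound constant. Establishing $r_P \le r_{\text{F-SPM}}^{\star}$ would instead require relating the circumcenter step directly to the product-space Friedrichs cosine $\cos_p(C_p,D_p)$, which is a separate, more geometric computation rather than the elementary algebra above.
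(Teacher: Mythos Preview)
Your algebra is correct and does prove the literal inequality between the two explicit constants produced in the proofs of Theorem~\ref{convspm} and Lemma~\ref{cvcspm}: with $t=\sqrt{1-1/r_m^2}$ the identity $r_{\text{F-SPM}}-r_P=(1-t)(1-p_j)\ge 0$ is valid, and your remark that replacing $p_j$ by $p_{\min}$ handles the $x$-dependence of $j$ is the right fix.

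The paper, however, takes a genuinely different and stronger route. It does not compare the two \emph{upper bounds}; it compares the two \emph{iterates} directly. Since $\text{T}_{\text{F-SPM}}(x)\in W_x$ (Lemma~\ref{proptsm}(i)) and $\text{C}_{\text{P}}(x)=P_{W_x}(s^\star)$ (Lemma~\ref{crmuni}), the Pythagorean identity of Lemma~\ref{lemwx} gives
\[
\|\text{C}_{\text{P}}(x)-s^\star\|^2=\|\text{T}_{\text{F-SPM}}(x)-s^\star\|^2-\|\text{T}_{\text{F-SPM}}(x)-\text{C}_{\text{P}}(x)\|^2,
\]
hence $\|\text{C}_{\text{P}}(x)-s^\star\|\le\|\text{T}_{\text{F-SPM}}(x)-s^\star\|$ for every $x$. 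This pointwise domination immediately yields $r_P\le r_{\text{F-SPM}}$ for \emph{any} admissible F-SPM contraction constant, including the optimal one $r_{\text{F-SPM}}^{\star}$ of \eqref{bestcimm}. So precisely the stronger comparison you flagged as out of reach for your argument is what the paper's geometric proof delivers for free. Your approach buys brevity and requires no further lemmas once the constants are on the table; the paper's approach buys the sharp conclusion and explains \emph{why} the inequality holds (the circumcenter is the nearest point of $W_x$ to $s^\star$, while the F-SPM step is merely some point of $W_x$).
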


\begin{proof} 
Due to Lemma \ref{lemwx} and the fact that $T_{\text{F-SPM}}(x) \in W_x$ (by Lemma \ref{proptsm}-(i)), we have 
$$\|\text{C}_{\text{P}}(x)-s^\star\|^2 = \|\text{T}_{\text{F-SPM}}(x) - s^\star\|^2 - \|\text{T}_{\text{F-SPM}}(x)-\text{C}_{\text{P}}(x)\|^2,$$ 
which implies
\begin{equation}\label{spmcrm}
\|\text{C}_{\text{P}}(x)-s^\star\| \le \|\text{T}_{\text{F-SPM}}(x)-s^\star\|,
\end{equation} 
which means that every iteration of \pCRM{} is closer to the common limit point $s^{\star}$ than that of F-SPM. 
\end{proof}

\section{Numerical experiments}\label{sec:numsim}

In this section, we present numerical experiments to illustrate the performance of the \pCRM{} method in comparison with \CRM~\cite{crm2}. As far as we know, the experimental (and theoretical) superiority of \CRM{} over other projection-based methods is already well established; see, for instance, \cite{crm_inspired3,crm_inspired2,Araujo:2022, Arefidamghani:2021, Behling:2021a, Behling:2024, Behling:2024b,Behling:2024c}. 

We benchmark both solvers on synthetic data. To this end, we first construct random Gaussian matrices of the form  
\[
(1 - c)A_{[m,n]} + c \cdot \ones_{[m,n]},
\]
where $A_{[m,n]} \sim \mathcal{N}(0,1)$ is a matrix of order $m \times n$ with entries drawn from the standard normal distribution, and $\ones_{[m,n]}$ is a matrix of ones of the same dimensions. The parameter $c \in [0,1]$ controls the coherence among the rows of the matrix; the larger the value of $c$, the greater the coherence.


In our experiments, we consider the case $m > n$, and evaluate three values of $c$: \{\num{0.}, \num{0.1}, \num{0.2}\}. Specifically, we set $m \in \{\num{5000}, \num{7500}, \num{10000}, \num{12500}\}$ and $n \in \{\num{100}, \num{250}, \num{500}\}$. For each configuration, we randomly select a vector $w \in \RR^m$ from the standard normal distribution and define the unique solution to the system of equations as $x^\star \coloneqq A^T w$. The right-hand side of the system is then computed as $b \coloneqq Ax^\star$. 

Next, we construct $\ell \coloneqq \left \lfloor \frac{m}{n}\right \rfloor + 1$ affine subspaces of the form
\[
U_i = \{x \in \RR^n \mid A_i x = b_i\}, \quad i = 1,\ldots,\ell,
\]
where each $A_i$ is the $i$-th block of the matrix $A$, and $b_i$ is the corresponding block of the vector $b$. The matrix $A$ is partitioned into $\ell$ blocks of size at least $(m/\ell) \times n$.

In all experiments, the initial point is set to the null vector, i.e., $x^0 = 0$. The stopping criterion is the relative error, defined as
\[
\texttt{rel\_err} = \frac{\|x^k - x^\star\|}{\|x^\star\|}.
\]
We set the maximum number of iterations to \num{e4} and the tolerance to \num{e-5}.

The computational experiments were performed on a machine equipped with an AMD Ryzen 9 7950X3D 16-Core Processor (32 threads), 128~GB of RAM, and running Ubuntu 24.04. All experiments were implemented in Julia v1.10~\cite{Bezanson:2017} and can be reproduced using the repository \url{https://github.com/lrsantos11/CRM-CFP}. To enhance the performance of \pCRM{}, we implemented a parallel operator for providing reflections using \texttt{ThreadsX.jl}\footnote{Freely available at \url{https://github.com/tkf/ThreadsX.jl}}, a Julia package that enables parallel computation via multithreading. In contrast, the original \CRM{} algorithm performs reflections sequentially in a single thread. This setup allows for a fair comparison that highlights the benefits of the parallel implementation. For completeness, we also run \pCRM{} in single-threaded mode to isolate the impact of parallelization. \CRM{} was also run in multi-threaded mode as well, but we do not report the results since they are similar  to those obtained in single-threaded mode.

Table~\ref{tab:timing_projections_comparison} reports the average wall-clock time (in seconds) and total number of projections for both \pCRM{} and \CRM{} across a range of problem configurations. Each pair of rows corresponds to a fixed problem size, defined by the number of affine subspaces (blocks), number of rows ($m$), and number of columns ($n$). For each configuration, the reported values are averages computed over three coherence levels: $c \in \{0.0, 0.1, 0.2\}$. The fastest runtime and fewest number of projections in each configuration are highlighted in bold.

\begin{table}[htbp]
    \centering
    \caption{Comparison between \pCRM{} and \CRM{} in terms of wall-clock time (in seconds) and total number of projections. The fastest time and fewest projections in each configuration are highlighted in bold.}
    \begin{tabular}{ccccccc}
    \toprule
    Method & \# Blocks & Rows ($m$) & Cols ($n$) & Projections & \multicolumn{1}{c}{Time (s)} \\
    \midrule
    \pCRM{} & 11  & 5000  & 500 & 80.7 & {0.06215} \\
    \CRM{}  & 11  & 5000  & 500 & \textbf{66.0} & \textbf{{0.05167}} \\
    \midrule
    \pCRM{} & 16  & 7500  & 500 & \textbf{96.0} & \textbf{{0.07608}} \\
    \CRM{}  & 16  & 7500  & 500 & \textbf{96.0} & {0.07713} \\
    \midrule
    \pCRM{} & 21  & 5000  & 250 & 126.0 & {0.03423} \\
    \CRM{}  & 21  & 5000  & 250 & \textbf{105.0} & \textbf{{0.02684}} \\
    \midrule
    \pCRM{} & 21  & 10000 & 500 & 126.0 & {0.10172} \\
    \CRM{}  & 21  & 10000 & 500 & \textbf{105.0} & \textbf{{0.08637}} \\
    \midrule
    \pCRM{} & 26  & 12500 & 500 & \textbf{130.0} & \textbf{{0.10346}} \\
    \CRM{}  & 26  & 12500 & 500 & \textbf{130.0} & {0.10597} \\
    \midrule
    \pCRM{} & 31  & 7500  & 250 & \textbf{155.0} & \textbf{{0.04402}} \\
    \CRM{}  & 31  & 7500  & 250 & \textbf{155.0} & {0.04458} \\
    \midrule
    \pCRM{} & 41  & 10000 & 250 & 205.0 & {0.05942} \\
    \CRM{}  & 41  & 10000 & 250 & \textbf{164.0} & \textbf{{0.04845}} \\
    \midrule
    \pCRM{} & 51  & 5000  & 100 & \textbf{153.0} & \textbf{{0.01333}} \\
    \CRM{}  & 51  & 5000  & 100 & \textbf{153.0} & {0.01340} \\
    \midrule
    \pCRM{} & 51  & 12500 & 250 & \textbf{204.0} & \textbf{{0.05638}} \\
    \CRM{}  & 51  & 12500 & 250 & \textbf{204.0} & {0.05656} \\
    \midrule
    \pCRM{} & 76  & 7500  & 100 & \textbf{228.0} & {0.02030} \\
    \CRM{}  & 76  & 7500  & 100 & \textbf{228.0} & \textbf{{0.01871}} \\
    \midrule
    \pCRM{} & 101 & 10000 & 100 & \textbf{202.0} & \textbf{{0.02017}} \\
    \CRM{}  & 101 & 10000 & 100 & \textbf{202.0} & {0.02032} \\
    \midrule
    \pCRM{} & 126 & 12500 & 100 & \textbf{252.0} & {0.02036} \\
    \CRM{}  & 126 & 12500 & 100 & \textbf{252.0} & \textbf{{0.02034}} \\
    \bottomrule
    \end{tabular}
    \label{tab:timing_projections_comparison}
    \end{table}

    This set of experiments serves as a proof-of-concept to illustrate the potential benefits of parallelizing the CRM framework. While \CRM{} remains competitive in several scenarios, \pCRM{} performs comparably or better in others, even on a moderately threaded machine. Since the current implementation relies on multithreading, additional performance gains are expected when executed on systems with a higher number of cores. Moreover, the parallel nature of \pCRM{} makes it a promising candidate for GPU acceleration, where massively parallel architectures can be exploited to further improve performance. These results indicate that \pCRM{} offers a scalable and flexible alternative, particularly suited for large-scale problems and modern computing environments.

\section{Concluding Remarks}

We introduced and analyzed a parallel variant of the Circumcentered Reflection method (\pCRM) for solving best approximation problems over the intersection of affine subspaces. By leveraging the geometric properties of circumcenters and the linear structure of the underlying sets, we established linear convergence of the proposed method under standard regularity conditions. In particular, the contraction property and invariance of projections played central roles in the analysis.

The proposed framework generalizes classical projection-based methods by incorporating symmetry and compositional flexibility, making it particularly well-suited for parallel and distributed implementations. Our numerical experiments confirm that \pCRM{} is a competitive and scalable alternative to its sequential counterpart. Even on a moderately threaded CPU, \pCRM{} achieves performance gains in several configurations, and its potential for speedup increases on systems with more cores. Moreover, the method’s inherent parallelism opens the door to efficient GPU implementations, which could significantly accelerate computations for large-scale problems. The theoretical and practical results for \pCRM{} complement those obtained for F-SPM, highlighting a common geometric foundation underlying both algorithms and establishing \pCRM{} as viable acceleration for simultaneous projection methods.

\bibliographystyle{plain}
\bibliography{references}

\end{document}